\newtheorem{theorem}{Theorem}[section]
\newtheorem{lem}[theorem]{Lemma}
\newtheorem{thm}[theorem]{Theorem}
\newtheorem{prop}[theorem]{Proposition}
\newtheorem{cor}[theorem]{Corollary}
\theoremstyle{definition}
\theoremstyle{remark}
\newtheorem{remark}[theorem]{Remark}
\numberwithin{equation}{section}
\def\rk{\mbox{\scriptsize rank\,}}
\def\mod{{\rm mod\;}}
\def\Mod{{\rm Mod}}
\def\Tor{{\rm Tor}}
\def\SDR{{\rm SDR}}
\def\supp{{\rm{supp\,}}}
\def\TN{\textsc{tn}}
\def\FL{\textsc{fl}}
\def\sp{\hspace{1ex}}
\def\sp{\hspace{0.3cm}}
\begin{document}

\title[Orientations, polytopes, and arrangements]
{Orientations, lattice polytopes, and group arrangements  III:
Cartesian product arrangements and applications to the Tutte type
polynomials of graphs}

\author{Beifang Chen}
\address{Department of Mathematics,
Hong Kong University of Science and Technology,
Clear Water Bay, Kowloon, Hong Kong}
\curraddr{}

\email{mabfchen@ust.hk}
\thanks{Research is supported by RGC Competitive Earmarked Research
Grants 600506, 600608, and 600409}


\subjclass[2000]{05A99, 05C31, 52C35; 05B35, 05C15, 05C20, 05C21,
05C45, 52B20, 52B40}
\date{28 October 2005}


\keywords{Acyclic orientation, totally cyclic orientation,
cut-Eulerian equivalence relation, tension polynomial, flow
polynomial, Tutte polynomial, characteristic polynomial,
multivariable characteristic polynomial, weighted complementary
polynomials, tension-flow polytope, group arrangement, cartesian
product arrangement, product valuation}

\begin{abstract}
A common generalization for the chromatic polynomial and the flow
polynomial of a graph $G$ is the Tutte polynomial $T(G;x,y)$. The
combinatorial meaning for the coefficients of $T$ was discovered by
Tutte at the beginning of its definition. However, for a long time
the combinatorial meaning for the values of $T$ is missing, except
for a few values such as $T(G;i,j)$, where $1\leq i,j\leq 2$, until
recently for $T(G;1,0)$ and $T(G;0,1)$. In this third one of a
series of papers, we introduce product valuations, cartesian product
arrangements, and multivariable characteristic polynomials, and
apply the theory of product arrangement to the tension-flow group
associated with graphs. Three types of tension-flows are studied in
details: elliptic, parabolic, and hyperbolic; each type produces a
two-variable polynomial for graphs. Weighted polynomials are
introduced and their reciprocity laws are obtained. The dual
versions for the parabolic case turns out to include Whitney's rank
generating polynomial and the Tutte polynomial as special cases. The
product arrangement part is of interest for its own right. The
application part to graphs can be modified to matroids.
\end{abstract}

\maketitle

\section{Introduction}

The Tutte polynomial $T(G;x,y)$ of a graph $G$ is of fundamental
importance in graph theory, for it is a common generalization of the
chromatic polynomial $\chi(G,t)$ and the flow polynomial
$\varphi(G,t)$, for it has enumerative applications in
combinatorics, and contains invariants as specializations of
polynomials in knots and partition functions in statistical physics;
see \cite{Brylawski-Oxley1, Chen-I, Chen-II, Welsh1, Wu1}. It is
well-known that the chromatic polynomial $\chi$ is coincidentally
equal to the characteristic polynomial of a hyperplane arrangement
associated with the graph $G$ (called the graphical arrangement in
\cite{Orlik-Terao1,Rota1}). Analogously, the flow polynomial
$\varphi$ is coincidentally equal to the characteristic polynomial
of the flow arrangement associated with $G$; see
\cite{Beck-Zaslavsky1,Chen-II,Greene-Zaslvsky1, Rota1}. It is then
natural to ask whether the Tutte polynomial $T$ appears as ceratin
two-variable characteristic polynomial of subspace arrangement
associated with $G$. Based on the work in the first two papers of
the series \cite{Chen-I,Chen-II}, we address in this third one the
issues that have been studied for the tension polynomial (equivalent
to the chromatic polynomial) and the flow polynomial to the Tutte
polynomial of graphs.

Notice that the chromatic polynomial arises from the pattern of the
number of proper colorations of a graph with a given set of colors
in terms of the cardinality of the color set, regardless of the
internal relations of the colors. More specifically, given a color
set $A$; the set of colorations of any set $S$ by $A$ is the set
$A^S$ of all functions from $S$ to $A$, the set $C_\textrm{nz}(G,A)$
of proper colorations of $G$ is just the set of all proper functions
from the vertex set of $G$ to $A$. The configuration set
$C_\textrm{nz}$ (precisely its indicator) can be expressed by
inclusion-exclusion as a linear expression in terms of $A^S$
(precisely their indicators) for some vertex subsets $S$. If $A$ is
finite, the cardinalities $|A^S|$ for various $S$ have the patterns
$|A|^0,|A|^1, |A|^2$, etc.; subsequently, the cardinality
$|C_\textrm{nz}|$ has a polynomial pattern in terms of the
cardinality $|A|$; such a polynomial is known as the chromatic
polynomial of $G$; see \cite{Birkhoff1}. If $A$ is infinite,
counting the number of elements of $C_\textrm{nz}$ does not make
sense, however, the patterns are still there. In fact, when $A$ is
an abelian group, say, $A$ is the ring $\Bbb Z$ of integers or the
field $\Bbb R$ of real numbers, the sets $A^S$ have the patterns
$[A]^0,[A]^1,[A]^2$, etc., and $C_\textrm{nz}$ has a polynomial
pattern in terms of $[A]$; such a polynomial is exactly the same as
the chromatic polynomial as they have the same pattern.

The situation for the flow polynomial is analogous, provided that
$A$ is required to be an abelian group in order to make the
conservation equations meaningful at each vertex. More specifically,
given an orientation $\varepsilon$ on the graph $G$ and an abelian
group $A$, the flow group $F(G,\varepsilon;A)$ of all flows on the
digraph $(G,\varepsilon)$ has a subgroup arrangement, known as {\em
flow arrangement}, consisting of flow subgroups $F_e$ of flows
vanishing on an edge $e$, where $e$ ranges over all edges; see
\cite{Rota1}. The flow polynomial $\varphi(G,t)$ is then the
characteristic polynomial of the flow arrangement. The philosophy
may apply to other polynomials or functions arising from ``counting"
in combinatorics and other fields.

Back to the problem of expressing the Tutte polynomial $T(G;x,y)$ as
possible two-variable characteristic polynomial of certain group
arrangement associated with the graph $G$, one has to require the
objects in the arrangement to be certain products to produce two
variables. Where the products come from naturally? It is not random
to consider the cartesian product of the coloration group $C(G,A)$
of all colorations of $G$ by a color group $A$ and the flow group
$F(G,\varepsilon;B)$ over an abelian group $B$. Notice that $C(G,A)$
can be naturally transformed into the tension group
$T(G,\varepsilon;A)$ by the difference operator (see Appendix 2); so
it is natural to work within the tension-flow group
$\Omega:=T(G,\varepsilon;A)\times F(G,\varepsilon;B)$. Of course we
shall not consider all tension-flows in $\Omega$. The tension-flows
$(f,g)\in\Omega$ that we have interests are the following three
types: (i) $\supp f\subseteq \ker g$, called {\em elliptic}; (ii)
$\supp f=\ker g$, called {\em parabolic}; and (iii) $\ker f\subseteq
\supp g$, called {\em  hyperbolic}.

We shall see that the counting of these tension-flows yields
two-variable polynomials when an appropriate valuation (finitely
additive measure) is equipped on $\Omega$, provided that $A,B$ are
finite, or finitely generated abelian groups, or infinite fields.
For the parabolic (also referred to complementary) case, there are
dual polynomials, which are exactly the existing Whitney polynomial
$R(G;x,y)$ or the Tutte polynomial $T(G;x+1,y+1)$. Our approach
automatically gives rise to combinatorial and geometric
interpretations for $T(G;x,y)$.

The paper is arranged as follows. Cartesian product arrangements are
introduced in Section 2 and a foundation for product valuations is
set up in general. The multivariable characteristic polynomial is
introduced as the total measure of a unique valuation of the
complement for any product arrangement. In Section 3, a two-variable
characteristic polynomial is produced by a product arrangement on
$\Omega$ by considering nowhere-zero tension-flows; this polynomial
corresponds to the hyperbolic case of tension-flows. In Section 4,
weighted integral complementary polynomial is introduced and its
dual is obtained. These weighted polynomials contain four variables,
generalizing the integral complementary polynomial in
\cite{Chen-Dual} and making a true reciprocity law for such
polynomials. The modular case of weighted complementary
tension-flows are studied in Section 5. Other weighted counting of
tension-flows are considered in Section 6. The product arrangement
part is of interest for its own right. The application part to
graphs can be modified to matroids. The first version of the paper
was finished in 2007 and was reported in the 2007 International
Conference on Graphs and Combinatorics; see \cite{Chen-TW}.

\section{Cartesian product arrangements}

Let $S$ be a non-empty set. A collection $\mathcal L$ of subsets of
$S$ is called an {\em intersectional class} if any finite
intersection\footnote{By convention the intersection of none of sets
is assumed to be the whole set $S$; such an intersection is not
considered to be a finite intersection unless it is stated
otherwise.} of sets from $\mathcal L$ is also a member of $\mathcal
L$. A class of subsets of $S$ is called a ({\em relative}) {\em
Boolean algebra} if it is closed under finite intersection, union,
and (relative) complement. For any class $\mathcal A$ of subsets of
$S$, we denote by $\mathscr L(\mathcal A)$ the smallest
intersectional class that contains $\mathcal A$. Then $\mathscr
L(\mathcal A)$ consists of all possible finite intersections of sets
from $\mathcal A$, called the {\em semilattice} generated by
$\mathcal A$. For an intersectional class $\mathcal L$ of $S$, we
denote by $\mathscr B(\mathcal L)$ the smallest relative Boolean
algebra that contains $\mathcal L$, and say that $\mathscr
B(\mathcal L)$ is generated by $\mathcal L$, and is further
generated by a class $\mathcal A$ of subsets if $\mathcal L$ is the
semilattice $\mathscr L(\mathcal A)$.

Let $\Omega$ be the cartesian product $\prod_{i=1}^n\Omega_i$ of
non-empty sets $\Omega_i$. Let $\mathscr{L}_i$ be an intersectional
class of $\Omega_i$, and let $\mathscr{B}_i$ be the relative Boolean
algebra generated by $\mathscr{L}_i$. We denote by
$\prod_{i=1}^n\mathscr L_i$ the smallest intersectional class of
$\Omega$ that contains the products $\prod_{i=1}^n A_i$, where
$A_i\in\mathscr{L}_i$; and denote by $\prod_{i=1}^n\mathscr B_i$ the
relative Boolean algebra generated by $\prod_{i=1}^n\mathscr L_i$.

\begin{prop}
Given a valuation $\nu_i$ on each $\mathscr B_i$ with values in a
commutative ring $R$, where $1\leq i\leq n$. There exits a unique
valuation $\nu:\prod_{i=1}^n\mathscr B_i\rightarrow R$ such that for
each $B_i$ of $\mathscr B_i$,
\[
\nu\bigg(\prod_{i=1}^nB_i\bigg)=\prod_{i=1}^n\nu_i(B_i).
\]
\end{prop}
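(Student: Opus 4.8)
The plan is to write $\nu$ down explicitly on the product Boolean algebra and then reduce every verification to a single normal form. The organizing fact is structural: I would first show that every member of $\prod_{i=1}^n\mathscr B_i$ is a finite \emph{disjoint} union of \emph{boxes}, by which I mean products $\prod_{i=1}^n B_i$ with $B_i\in\mathscr B_i$. To get this, let $\mathscr D$ be the family of all finite disjoint unions of boxes and check that $\mathscr D$ is a relative Boolean algebra. It is closed under intersection because $\big(\prod_i A_i\big)\cap\big(\prod_i B_i\big)=\prod_i(A_i\cap B_i)$ is again a box, and it is closed under relative complement because
\[
\prod_{i=1}^n A_i\ \setminus\ \prod_{i=1}^n B_i=\bigsqcup_{i=1}^n\Big(\prod_{j<i}(A_j\cap B_j)\Big)\times(A_i\setminus B_i)\times\Big(\prod_{j>i}A_j\Big)
\]
is a disjoint union of boxes (the summand indexed by $i$ collects the points whose first coordinate outside $\prod_j B_j$ is the $i$-th), every factor lying in the appropriate $\mathscr B_j$; closure under union then follows. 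Since $\mathscr D$ contains every product of $\mathscr L_i$-sets it contains $\prod_i\mathscr L_i$, hence the generated algebra $\prod_i\mathscr B_i$, while the reverse inclusion holds once one checks each single box lies in $\prod_i\mathscr B_i$. Thus $\mathscr D=\prod_i\mathscr B_i$.

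With this normal form in hand I would \emph{define}
\[
\nu\bigg(\bigsqcup_{k}\prod_{i=1}^n B_i^{(k)}\bigg):=\sum_k\prod_{i=1}^n\nu_i\big(B_i^{(k)}\big),
\]
so that the required product formula holds on boxes by construction. The crux — and the step I expect to be the main obstacle — is \emph{well-definedness}: this sum must not depend on the chosen disjoint decomposition into boxes. Here I would pass to a common refinement. For each coordinate $i$ the finitely many sets $B_i^{(k)}$ occurring in a given representation generate a finite Boolean subalgebra of $\mathscr B_i$ whose atoms form a finite disjoint family $\{A_i^{(p)}\}_{p\in P_i}$, and each $B_i^{(k)}$ is the disjoint union of the atoms indexed by some $P_i^{(k)}\subseteq P_i$. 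The products $\prod_i A_i^{(p_i)}$ of atoms — the \emph{grid cells} — are pairwise disjoint, and each box is the disjoint union of the cells it contains. Finite additivity of each $\nu_i$ on disjoint unions (which follows from the valuation law together with $\nu_i(\emptyset)=0$) and the distributive law in the commutative ring $R$ give
\[
\prod_{i=1}^n\nu_i\big(B_i^{(k)}\big)=\prod_{i=1}^n\sum_{p\in P_i^{(k)}}\nu_i\big(A_i^{(p)}\big)=\sum_{(p_i)}\prod_{i=1}^n\nu_i\big(A_i^{(p_i)}\big),
\]
the last sum ranging over the cells of that box. Summing over $k$, the value of $\nu$ on the element equals $\sum\prod_i\nu_i(A_i^{(p_i)})$ taken over all grid cells contained in it, a quantity manifestly depending only on the element and not on its representation. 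This settles well-definedness, and it is exactly the point at which commutativity of $R$ is used.

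It then remains to verify that $\nu$ is a valuation and that it is unique, both of which I expect to be routine from the grid-cell description. Additivity on disjoint unions of members of $\prod_i\mathscr B_i$ is immediate, since a common refinement of two elements simply concatenates their cells. For the modular identity $\nu(E\cup F)+\nu(E\cap F)=\nu(E)+\nu(F)$ I would refine $E$ and $F$ simultaneously to one grid; at the level of cells the identity reduces to the pointwise relation $\mathbf 1_{E\cup F}+\mathbf 1_{E\cap F}=\mathbf 1_E+\mathbf 1_F$, while $\nu(\emptyset)=0$ is clear. Finally, for uniqueness, any valuation $\tilde\nu$ satisfying the product formula agrees with $\nu$ on every box; being finitely additive on disjoint unions it must then agree with $\nu$ on every finite disjoint union of boxes, that is, on all of $\prod_i\mathscr B_i$ by the structural lemma, so $\tilde\nu=\nu$.
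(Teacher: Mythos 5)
Your proof is correct and follows essentially the same route as the paper: the normal form (every member of $\prod_{i=1}^n\mathscr B_i$ is a finite disjoint union of boxes $\prod_i B_i$ with $B_i\in\mathscr B_i$) is exactly the paper's Appendix~1 product lemma, your definition of $\nu$ on such unions is the paper's, and your well-definedness argument via atoms of a common refinement is precisely the paper's refinement of the collections $\mathscr C_i$ (drawn from both representations) into disjoint families $\mathscr D_i$ and comparison of the two sums over the cells contained in the element. The only differences are cosmetic: your telescoping formula for a box minus a box gives a cleaner inline proof of the structural lemma than the paper's appendix decomposition, and your explicit verification of modularity and uniqueness spells out steps the paper treats as immediate.
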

\begin{proof}
Let $X$ be an object of $\prod_{i=1}^n\mathscr{B}_i$ and be written
in finite disjoint union of the form
$X=\bigsqcup_{(j_i)}\prod_{i=1}^n A_{i,j_i}$, where
$A_{i,j_i}\in\mathscr B_i$. We define
\[
\nu(X): =\sum_{(j_i)}\nu\bigg(\prod_{i=1}^nA_{i,j_i}\bigg)
=\sum_{(j_i)}\prod_{i=1}^n\nu_i(A_{i,j_i}).
\]
It suffices to show that $\nu$ is well-defined. Let $X$ be written
in another form $X=\bigsqcup_{(k_i)}\prod_{i=1}^n B_{i,k_i}$, where
$B_{i,k_i}\in\mathscr B_i$. For each $1\leq i\leq n$, we may refine
the collection $\mathscr{C}_i=\{A_{i,j_i}, B_{i,k_i}, A_{i,j_i}\cap
B_{i,j_i}\:|\:j_i,k_i\}$ into a sub-collection $\mathscr{D}_i$ of
$\mathscr B_i$, consisting of disjoint subsets so that each member
in $\mathscr{C}_i$ is a union of some members in $\mathscr{D}_i$.
Then
\[
\prod_{i=1}^n A_{i,j_i} = \bigsqcup_{D_i\in\mathscr{D}_i,
D_i\subseteq A_{i,j_i}\atop i=1,2,\ldots,n} \prod_{i=1}^n D_i,
\]
\[
\prod_{i=1}^n B_{i,k_i} = \bigsqcup_{D_i\in\mathscr{D}_i,
D_i\subseteq B_{i,k_i}\atop i=1,2,\ldots,n} \prod_{i=1}^n D_i.
\]
By definition of $\nu$, we have
\[
\nu\bigg(\prod_{i=1}^n A_{i,j_i}\bigg) = \sum_{D_i\in\mathscr{D}_i,
D_i\subseteq A_{i,j_i}\atop i=1,2,\ldots,n} \prod_{i=1}^n
\nu_i(D_i),
\]
\[
\nu\bigg(\prod_{i=1}^n B_{i,k_i}\bigg) = \sum_{D_i\in\mathscr{D}_i,
D_i\subseteq B_{i,k_i}\atop i=1,2,\ldots,n} \prod_{i=1}^n
\nu_i(D_i).
\]
Note that $\prod_{i=1}^n D_i\subseteq X$ is equivalent to
$\prod_{i=1}^n D_i\subseteq \prod_{i=1}^n A_{i,j_i}$ for some
$(j_i)$, and is also equivalent to $\prod_{i=1}^n D_i\subseteq
\prod_{i=1}^n B_{i,k_i}$ for some $(k_i)$. Thus
\begin{eqnarray*}
\sum_{(j_i)} \prod_{i=1}^n\nu_i(A_{i,j_i}) &=& \sum_{(j_i)}
\sum_{D_i\in\mathscr{D}_i, D_i\subseteq A_{i,j_i} \atop
i=1,2,\ldots,n} \prod_{i=1}^n \nu_i(D_i) \\
&=& \sum_{D_i\in\mathscr{D}_i,1\leq i\leq n\atop
D_1\times\cdots\times D_n \subseteq X} \prod_{i=1}^n\nu_i(D_i)\\
&=& \sum_{(k_i)} \prod_{i=1}^n\nu_i(B_{i,k_i}).
\end{eqnarray*}
This means that $\nu(X)$ is well-defined.
\end{proof}

Let $\Omega=\prod_{i=1}^n\Omega_i$ be the product group of abelian
groups $\Omega_i$, either all are finitely generated or all are
vector spaces over an infinite field $\mathbbm{k}$. Let
$\mathscr{L}(\Omega_i)$ be the intersectional class generated by
cosets of all subgroups of $\Omega_i$; let $\mathscr{B}(\Omega_i)$
be the Boolean algebra generated by $\mathscr{L}(\Omega_i)$. Then
$\prod_{i=1}^n\mathscr L_i(\Omega_i)$ is an intersectional class
consisting of all products $\prod_{i=1}^nA_i$, where $A_i$ are
cosets of some subgroups of $\Omega_i$; and
$\prod_{i=1}^n\mathscr{B}(\Omega_i)$ is the Boolean algebra
generated by $\prod_{i=1}^n\mathscr{L}(\Omega_i)$. For each abelian
group $\Gamma$, either finitely generated or a vector space over an
infinite field, the {\em size} of $\Gamma$ is defined as
\[
|\Gamma|:=|\Tor(\Gamma)|\,t^{\rk(\Gamma)},
\]
where $\Tor(\Gamma)$ is the torsion subgroup of $\Gamma$ whose
elements have finite orders, and $|\Tor(\Gamma)|$ is the cardinality
of $\Tor(\Gamma)$. When $\Gamma$ is a vector space, then
$\Tor(\Gamma)=\{0\}$ and the rank is meant the dimension.

\begin{thm}
Let $\Omega=\prod_{i=1}^n\Omega_i$ be the product group of abelian
groups $\Omega_i$, either all are finitely generated or all are
vector spaces over an infinite field $\mathbbm{k}$. Then there
exists a unique translation-invariant valuation
\[
\lambda: \prod_{i=1}^n\mathscr{B}(\Omega_i)\rightarrow{\Bbb
Q}[t_1,\ldots,t_n]
\]
such that for subgroups $A_i$ of $\Omega_i$,
\begin{equation}\label{Regulation-Multi-Valuation}
\lambda\bigg(\prod_{i=1}^nA_i\bigg)=\prod_{i=1}^n
\frac{|\Tor(\Omega_i)|}{|\Tor(\Omega_i/A_i)|}\,t_i^{\,{\rm
rank}(A_i)}.
\end{equation}
\end{thm}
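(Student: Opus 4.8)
The plan is to reduce the assertion to the one-factor case and then invoke the Proposition proved above. Set $R=\Bbb Q[t_1,\dots,t_n]$. If for each $i$ I can produce a translation-invariant valuation
\[
\lambda_i:\mathscr B(\Omega_i)\longrightarrow \Bbb Q[t_i]\subseteq R
\]
with $\lambda_i(A_i)=\frac{|\Tor(\Omega_i)|}{|\Tor(\Omega_i/A_i)|}\,t_i^{\Rk(A_i)}$ on every subgroup $A_i$, then applying the Proposition with $\nu_i=\lambda_i$ yields a unique valuation $\lambda=\prod_i\lambda_i$ on $\prod_i\mathscr B(\Omega_i)$ satisfying $\lambda(\prod_iA_i)=\prod_i\lambda_i(A_i)$, which is exactly \eqref{Regulation-Multi-Valuation}. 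Translation-invariance of $\lambda$ is then inherited from that of the $\lambda_i$, since a translate of a product of cosets is again a product of cosets with the same factor-measures, and every member of $\prod_i\mathscr B(\Omega_i)$ is a finite disjoint union of such products.

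Thus the whole problem rests on the case $n=1$; write $G=\Omega_i$ and $t=t_i$. First I would fix the values on cosets by translation-invariance, putting $\lambda_i(a+A):=\frac{|\Tor(G)|}{|\Tor(G/A)|}\,t^{\Rk(A)}$ and $\lambda_i(\emptyset)=0$. The one algebraic identity that makes this coherent is the following: if $D\le C\le G$ with $[C:D]$ finite, then $\lambda_i(C)=[C:D]\,\lambda_i(D)$. Since $\Rk(C)=\Rk(D)$, this is equivalent to $|\Tor(G/D)|=[C:D]\,|\Tor(G/C)|$, which I would read off from the short exact sequence
\[
0\longrightarrow C/D\longrightarrow \Tor(G/D)\longrightarrow \Tor(G/C)\longrightarrow 0;
\]
here $C/D$ is finite, and surjectivity onto $\Tor(G/C)$ holds because any $x$ with $nx\in C$ already satisfies $[C:D]\,nx\in D$. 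This guarantees that whenever a coset is partitioned into the cosets of a finite-index subgroup, the prescribed values add up correctly.

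The hard part will be extending $\lambda_i$ from the generating cosets to a genuine valuation on $\mathscr B(G)$, i.e. verifying that all inclusion–exclusion relations among coset indicators are respected. I would treat this by specialization to finite counting. If $G$ is finite the formula collapses to $\lambda_i(A)=|G|/|G/A|=|A|$, so $\lambda_i$ is the counting measure and is patently a valuation. In general I would reduce an arbitrary $X\in\mathscr B(G)$, which is built from finitely many cosets, to finite quotients: for a vector space over the infinite field $\mathbbm k$ one passes, through a finitely generated subring carrying the finitely many defining data, to finite residue fields $\Bbb F_q$ and uses that the number of points of the reduction of $X$ is a polynomial in $q$; for finitely generated $G$ one reduces the free part modulo large, suitably divisible $N$ while keeping the torsion, so that $X$ maps into a finite group of cardinality polynomial in $N$. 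In both cases the normalizing constant $|\Tor(G)|/|\Tor(G/A)|$ is precisely what matches these finite cardinalities, so one sets $\lambda_i(X)$ to be that polynomial evaluated at $t$; since the valuation identities hold for the finite counting measures at infinitely many $q$ (resp.\ $N$), they pass to polynomial identities in $t$. This polynomiality step is where I expect the most care to be needed, and it is exactly the single-variable construction carried out in \cite{Chen-I,Chen-II}.

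Finally, for uniqueness I would argue that any translation-invariant valuation $\lambda'$ obeying \eqref{Regulation-Multi-Valuation} agrees with $\lambda$ on every product of cosets: such a product is a translate of a product of subgroups, so its $\lambda'$-value is forced by \eqref{Regulation-Multi-Valuation} together with translation-invariance. The products of cosets, being closed under finite intersection, constitute the generating intersectional class $\prod_i\mathscr L(\Omega_i)$ of $\prod_i\mathscr B(\Omega_i)$, and a valuation on a relative Boolean algebra is determined by its restriction to a generating intersectional class via the same inclusion–exclusion bookkeeping used in the proof of the Proposition. Hence $\lambda'=\lambda$, which completes the plan.
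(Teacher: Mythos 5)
Your proposal is correct and takes essentially the same route as the paper: reduce to the one-factor case, invoke the product-valuation Proposition to assemble $\lambda=\prod_{i=1}^n\lambda_i$, and deduce uniqueness from that of the $\lambda_i$, the paper simply citing \cite{Chen-I} (finitely generated case) and \cite{Ehrenborg-Readdy1} (vector-space case) for the one-variable valuations that you instead sketch constructing. Your extra details (the coset-index coherence identity via the exact sequence, the finite-quotient polynomiality argument, and the explicit uniqueness argument through the generating intersectional class of products of cosets) are consistent with, and somewhat more careful than, what the paper records, but they do not constitute a different method.
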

\begin{proof}
Let $\lambda_i:\mathscr{B}(\Omega_i)\rightarrow{\Bbb Q}[t_i]$ be the
unique translation-invariant valuation such that for each subgroup
$A_i$ of $\Omega_i$,
\[
\lambda_i(A_i)=\frac{|\Tor(\Omega_i)|}{|\Tor(\Omega_i/A_i)|}t_i^{\rk(A_i)};
\]
see \cite{Chen-I} (p.429-433) for the case of finitely generated
abelian groups and \cite{Ehrenborg-Readdy1} for the case of vector
spaces. Then the product $\lambda:=\prod_{i=1}^n\lambda_i$ is a
translation-invariant valuation satisfying
(\ref{Regulation-Multi-Valuation}). The uniqueness follows
immediately from the uniqueness of $\lambda_i$.
\end{proof}

Let $\pi_i:\Omega\rightarrow\Omega_i$ be the obvious projection. A
{\em cartesian product arrangement} $\mathcal A$ of $\Omega$ is a
finite collection of cartesian products $\prod_{i=1}^nF_i$, where
$F_i$ are cosets of some subgroups of $\Omega_i$. The {\em
semilattice} of $\mathcal A$ is the collection $L(\mathcal A)$ of
all possible non-empty intersections of sets from $\mathcal A$,
including the intersection of none of sets, which is assumed to be
the whole group $\Omega$. Let $\mu$ be the M\"{o}bius function of
the poset $L(\mathcal A)$, whose partial order is the set-inclusion.
We introduce the {\em multivariable characteristic polynomial}
\begin{equation}
\chi(\mathcal A;t_1,\ldots,t_n):=\sum_{X\in L(\mathcal A)}
\mu(X,\Omega) \prod_{i=1}^n
\frac{|\Tor(\Omega_i)|}{|\Tor\bigl(\Omega_i/\pi_i\langle
X\rangle\bigr)|} \,t_i^{\,\rk\pi_i\langle X\rangle},
\end{equation}
which has integer coefficients whenever $\Omega_i$ are vector
spaces. This generalizes the one-variable characteristic polynomial
\cite{Rota1,Zaslavsky2}, and will be useful to unify some
multivariable polynomials arising in combinatorics such as the Tutte
polynomial of graphs and matroids. For instance, the main theorem of
the book by Crapo and Rota \cite{Crapo-Rota1} can be obtained by the
characteristic polynomial of cartesian product arrangement.

\begin{thm}
Let $\mathcal A$ be a cartesian product arrangement of the product
space $\Omega:=\prod_{i=1}^n\Omega_i$ of abelian groups $\Omega_i$,
either all are finitely generated or all are vector spaces over an
infinite field $\mathbbm{k}$. Then
\begin{equation}\label{Lambda-Formula}
\chi\big(\mathcal A;t_1,\ldots,t_n\big) =
\lambda\bigg(\Omega-\bigcup_{A\in\mathcal A}A\bigg).
\end{equation}
\end{thm}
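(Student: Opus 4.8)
The plan is to realize the complement $\Omega-\bigcup_{A\in\mathcal A}A$ as one block of a finite partition of $\Omega$ indexed by the semilattice $L(\mathcal A)$, and then to recover the characteristic polynomial by Möbius inversion applied to the values of $\lambda$ on that partition.

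First I would introduce, for each point $\omega\in\Omega$, its \emph{closure}
\[
c(\omega):=\bigcap\{A\in\mathcal A: \omega\in A\},
\]
with the convention that the empty intersection is $\Omega$. Since $\mathcal A$ is finite, $c(\omega)$ is always a nonempty (it contains $\omega$) member of $L(\mathcal A)$, and $\omega\in c(\omega)$. For $Y\in L(\mathcal A)$ set $F_Y:=\{\omega\in\Omega: c(\omega)=Y\}$; these blocks partition $\Omega$. The key combinatorial lemma to verify is that, for every $X\in L(\mathcal A)$,
\[
X=\bigsqcup_{Y\in L(\mathcal A),\,Y\subseteq X}F_Y,
\]
which rests on the elementary equivalence $\omega\in X\iff c(\omega)\subseteq X$: the forward direction holds because $X$ is an intersection of members each containing $\omega$, forcing $c(\omega)\subseteq X$; the backward direction holds because $\omega\in c(\omega)$. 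In particular the top block $F_\Omega$ is exactly $\Omega-\bigcup_{A\in\mathcal A}A$.

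Next I would check that each $F_Y$ lies in the Boolean algebra $\prod_{i=1}^n\mathscr B(\Omega_i)$, since $F_Y=Y\setminus\bigcup_{Z\in L(\mathcal A),\,Z\subsetneq Y}Z$ is obtained from products of cosets by finitely many Boolean operations; hence $\lambda(F_Y)$ is defined. Because $\lambda$ is finitely additive (and vanishes on $\varnothing$) and the union above is disjoint, the partition lemma gives
\[
\lambda(X)=\sum_{Y\subseteq X}\lambda(F_Y)\qquad\text{for all }X\in L(\mathcal A).
\]
Viewing both $\lambda(X)$ and $\lambda(F_Y)$ as elements of the ring ${\Bbb Q}[t_1,\ldots,t_n]$ and regarding $L(\mathcal A)$ as a finite poset under inclusion, Möbius inversion yields $\lambda(F_X)=\sum_{Y\subseteq X}\mu(Y,X)\,\lambda(Y)$; specializing to the top element $X=\Omega$ gives
\[
\lambda\bigg(\Omega-\bigcup_{A\in\mathcal A}A\bigg)=\lambda(F_\Omega)=\sum_{Y\in L(\mathcal A)}\mu(Y,\Omega)\,\lambda(Y).
\]

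Finally, each $Y\in L(\mathcal A)$ is a product of cosets $Y=\prod_{i=1}^n\pi_i(Y)$, where $\pi_i(Y)$ is a coset of the subgroup $\pi_i\langle Y\rangle$, so by translation-invariance and the defining formula \eqref{Regulation-Multi-Valuation} of the preceding theorem,
\[
\lambda(Y)=\prod_{i=1}^n\frac{|\Tor(\Omega_i)|}{|\Tor(\Omega_i/\pi_i\langle Y\rangle)|}\,t_i^{\,\rk\pi_i\langle Y\rangle}.
\]
Substituting this into the displayed sum reproduces the definition of $\chi(\mathcal A;t_1,\ldots,t_n)$ verbatim, proving \eqref{Lambda-Formula}. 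I expect the only genuine points requiring care to be the partition lemma $X=\bigsqcup_{Y\subseteq X}F_Y$ (through the closure equivalence) and the verification that the blocks $F_Y$ belong to the Boolean algebra, so that finite additivity of $\lambda$ legitimately drives the Möbius inversion over the polynomial ring; once these are in place the computation is forced.
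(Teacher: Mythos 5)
Your proposal is correct and follows essentially the same route as the paper: your blocks $F_Y$ are exactly the paper's sets $Y^0=Y-\bigcup_{Z<Y}Z$, your partition lemma $X=\bigsqcup_{Y\subseteq X}F_Y$ is the paper's decomposition $Y=\bigsqcup_{X\leq Y}X^0$, and the M\"obius inversion followed by the coset-product computation of $\lambda(Y)$ is identical. The only (cosmetic, and slightly advantageous) difference is the order of operations: the paper inverts at the level of indicator functions and then invokes Groemer's extension theorem to apply $\lambda$ linearly, whereas you apply $\lambda$ first, using finite additivity on the disjoint blocks, and invert the resulting identity in ${\Bbb Q}[t_1,\ldots,t_n]$, which makes your version self-contained and bypasses the appeal to Groemer's theorem.
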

\begin{proof}
The idea is similar to that of
\cite{Chen-Characteristic-Polynomial}. For each subset
$S\subseteq\Omega$, let $1_S$ denote the indicator function of $S$,
i.e., $1_S(x)=1$ for $x\in S$ and $1_S(x)=0$ for $x\in\Omega-S$. For
each member $X$ of $L(\mathcal A)$, define the set
\[
X^0:=X-\bigcup_{Z\in L(\mathcal A),Z<X}Z.
\]
Then $\{X^0\:|\:X\in L(\mathcal A)\}$ is a collection of disjoint
subsets of $\Omega$. Moreover, each member $Y$ of $L(\mathcal A)$
can be written as a disjoint union $Y =\bigsqcup_{X\in L(\mathcal
A),\,X\leq Y} X^0$, so that
\[
1_Y=\sum_{X\in L(\mathcal A),\,X\leq Y} 1_{X^0}.
\]
By the M\"{o}bius inversion,
\[
1_{Y^0} =\sum_{X\in L(\mathcal A),\,X\leq Y} \mu(X,Y)\,1_X, \sp Y\in
L(\mathcal A).
\]
In particular, since $\Omega^0=\Omega-\bigcup_{A\in\mathcal A}A$, we
have
\begin{equation}\label{Fundamental-Valuation}
1_{\Omega^0} =\sum_{X\in L(\mathcal A)} \mu(X,\Omega)\,1_X.
\end{equation}
By Groemer's extension theorem \cite{Groemer1}, any valuation on a
relative Boolean algebra of sets can be uniquely extended to a
valuation (or integral) on the vector space generated by the
indicator functions of sets from the given Boolean algebra. Applying
the valuation $\lambda$ to both sides of
(\ref{Fundamental-Valuation}), we obtain
\begin{equation}\label{LML}
\lambda\bigg(\Omega-\bigcup_{A\in\mathcal A}A\bigg) =\sum_{X\in
L(\mathcal A)} \mu(X,\Omega)\lambda(X).
\end{equation}
Since $X$ is a product of cosets of some subgroups of $\Omega_i$,
i.e., $X=\prod_{i=1}^n\pi_i(X)$, we see that
\begin{eqnarray*}
\lambda(X) &=& \prod_{i=1}^n\lambda_i(\pi_i(X))
=\prod_{i=1}^n\lambda_i(\pi_i\langle X\rangle) \\
&=& \prod_{i=1}^n
\frac{|\Tor(\Omega_i)|}{|\Tor(\Omega_i/\pi_i\langle X\rangle)|}
\,t_i^{\rk\pi_i\langle X\rangle}.
\end{eqnarray*}
Substitute $\lambda(X)$ into (\ref{LML}); we obtain the formula
(\ref{Lambda-Formula}).
\end{proof}


\section{The tension-flow arrangement}

Let $A,B$ be abelian groups. A {\em tension-flow} of
$(G,\varepsilon)$ is an element $(f,g)$ in the {\em tension-flow
group}
\begin{equation}
\Omega=\Omega\bigl(G,\varepsilon;A, B\bigr): =
T(G,\varepsilon;A)\times F(G,\varepsilon;B). \label{TF-Group}
\end{equation}
The pair $(f,g)$ can be viewed as a function from $E$ to the abelian
group $A\times B$. A tension-flow $(f,g)$ is said to be {\em
nowhere-zero} if
\[
(f(e),g(e))\neq (0,0) \sp \mbox{for all edges $e\in E$.}
\]
Let $\Omega_{\rm nz}=\Omega_{\rm nz}(G,\varepsilon;A, B)$ denote the
set of all nowhere-zero tension-flows of $(G,\varepsilon)$. Whenever
$A=B$, we simply write $\Omega(G,\varepsilon;A)$ for
$\Omega(G,\varepsilon;A,B)$, $\Omega_{\rm nz}(G,\varepsilon;A)$ for
$\Omega_{\rm nz}(G,\varepsilon;A,B)$; and whenever $A=\Bbb R$, we
further write $\Omega(G,\varepsilon)$ for
$\Omega(G,\varepsilon;{\Bbb R})$, $\Omega_{\rm nz}(G,\varepsilon)$
for $\Omega_{\rm nz}(G,\varepsilon;{\Bbb R})$. For subsets
$X,Y\subseteq E$, let $T_X(G,\varepsilon,A)$ denote the tension
subgroup consisting of those tensions vanishing on $X$,
$F_Y(G,\varepsilon,B)$ the flow subgroup consisting of those flows
vanishing on $Y$, and define the tension-flow subgroup
\begin{equation}\label{Omega-XY}
\Omega_{X,Y}=\Omega_{X,Y}(G,\varepsilon;A, B): =T_X(G,\varepsilon;A)
\times F_Y(G,\varepsilon;B).
\end{equation}
If $X=Y$, we simply write $\Omega_X$ for $\Omega_{X,X}$.

Let $|A|=p$ and $|B|=q$ be finite. We introduce the counting
function
\begin{equation}
\omega(G;p,q):=|\Omega_{\rm nz}(G,\varepsilon;A, B)|,
\end{equation}
which is a polynomial function of positive integers $p,q$, and is
independent of the chosen orientation $\varepsilon$ and the group
structures of $A,B$, called the {\em hyperbolic tension-flow
polynomial} of $G$. It is called hyperbolic because $(f,g)$ is
allowed to have nonzero values at an edge for both $f$ and $g$.
Notice that for subsets $X,Y\subseteq E$,
\begin{equation}
|\Omega_{X,Y}(G,\varepsilon; A, B)| = p^{r\langle E\rangle-r\langle
X\rangle} q^{n\langle Y^c\rangle},
\end{equation}
where $\langle X\rangle=(V,X)$ is the spanning subgraph with the
edge set $X$ and $Y^c:=E-Y$.

The {\em tension-flow arrangement} of $(G,\varepsilon)$ is a
subgroup arrangement $\mathcal{A}(G,\varepsilon; A,B)$ of the
tension-flow group $\Omega(G,\varepsilon; A,B)$, consisting of the
subgroups
\begin{equation}\label{Omega-e}
\Omega_e: =\{(f,g)\in\Omega(G,\varepsilon; A, B)\:|\: (f,g)(e)=0\},
\sp e\in E.
\end{equation}
It is clear that $\Omega_e=T_e\times F_e$, where $T_e=T_{\{e\}}$ and
$F_e=F_{\{e\}}$. So $\mathcal{A}$ is a cartesian subgroup
arrangement of $\Omega$; its semilattice $L(\mathcal{A})$ consists
of the subgroups $\Omega_X$, where $X\subseteq E$. The complement of
$\mathcal{A}$ is
\begin{equation}\label{Omega-NZ-Complement}
\Omega_{\rm nz} = \Omega -\bigcup_{e\in E} \Omega_e.
\end{equation}
To see that $\omega(G;p,q)$ is independent of the chosen orientation
$\varepsilon$, the involution $P_{\varrho,\varepsilon}:
A^E\rightarrow A^E$, defined for each orientation $\varrho$ of $G$
by
\begin{equation}
(P_{\varrho,\varepsilon}f)(e) = [\varrho,\varepsilon](e) f(e),\sp
f\in A^E, e\in E,
\end{equation}
is a group isomorphism from $\Omega(G,\varepsilon; A, B)$ to
$\Omega(G,\varrho; A, B)$.

\begin{thm}
Let $\lambda$ be the unique product valuation on
$\Omega(G,\varepsilon;\mathbbm{k})$ with values in ${\Bbb Q}[x,y]$,
where $\mathbbm{k}\in\{\Bbb Z,\Bbb Q, \Bbb R,\Bbb C\}$. Then for
positive integers $p,q$,
\begin{equation}\label{Omega-pq-Identity}
\omega(G;p,q) = \chi(\mathcal{A}(G);p,q) = \lambda(\Omega_{\rm
nz}(G))\big|_{(x,y)=(p,q)}.
\end{equation}
Moreover, the polynomial $\omega$ has the expansion
\begin{equation}\label{Omega-xy-Expansion}
\omega(G;x,y) = \sum_{X\subseteq E} (-1)^{|X|} x^{r\langle E\rangle
- r\langle X\rangle} y^{n\langle X^c\rangle}.
\end{equation}
\end{thm}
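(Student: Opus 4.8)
The plan is to treat this as the two-factor ($n=2$) instance of the cartesian product machinery of Section 2, with $\Omega_1=T(G,\varepsilon;\mathbbm{k})$, $\Omega_2=F(G,\varepsilon;\mathbbm{k})$, the arrangement $\mathcal A(G)=\{\Omega_e\:|\:e\in E\}$ from (\ref{Omega-e}), and the two variables $(t_1,t_2)$ renamed $(x,y)$. The second equality in (\ref{Omega-pq-Identity}) is then essentially free: by (\ref{Omega-NZ-Complement}) we have $\Omega_{\rm nz}(G)=\Omega-\bigcup_{e\in E}\Omega_e$, so the preceding theorem, equation (\ref{Lambda-Formula}), gives the identity of polynomials $\chi(\mathcal A(G);x,y)=\lambda(\Omega_{\rm nz}(G))$, and evaluating at $(x,y)=(p,q)$ yields $\chi(\mathcal A(G);p,q)=\lambda(\Omega_{\rm nz}(G))\big|_{(x,y)=(p,q)}$.

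For the remaining equality and the expansion (\ref{Omega-xy-Expansion}) I would bypass the M\"obius sum over $L(\mathcal A)$ and work instead with the Boolean inclusion--exclusion over edge subsets. Since $\bigcap_{e\in X}\Omega_e=\Omega_X$ for every $X\subseteq E$ (with $\Omega_\emptyset=\Omega$), the indicator identity
\[
1_{\Omega_{\rm nz}}=\prod_{e\in E}\bigl(1_\Omega-1_{\Omega_e}\bigr)=\sum_{X\subseteq E}(-1)^{|X|}1_{\Omega_X}
\]
holds in the vector space generated by indicator functions on $\Omega$. This single identity drives both the counting statement and the polynomial statement.

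Next I would apply the two relevant valuations termwise to this identity, the legitimacy of termwise application being exactly Groemer's extension theorem (as used in the proof of (\ref{Lambda-Formula})). Applying the product valuation $\lambda$ and using $\Omega_X=T_X\times F_X$ together with (\ref{Regulation-Multi-Valuation}) gives $\lambda(\Omega_X)=\lambda_1(T_X)\lambda_2(F_X)=x^{\,r\langle E\rangle-r\langle X\rangle}\,y^{\,n\langle X^c\rangle}$; here one checks that the torsion correction factors are trivial because $T(G,\varepsilon;\mathbbm{k})/T_X$ and $F(G,\varepsilon;\mathbbm{k})/F_X$ embed in free modules and are therefore torsion-free, while the exponents are read off from the rank formula underlying $|\Omega_{X,Y}|=p^{r\langle E\rangle-r\langle X\rangle}q^{n\langle Y^c\rangle}$ taken with $X=Y$. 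Summing over $X$ then produces exactly (\ref{Omega-xy-Expansion}) for $\lambda(\Omega_{\rm nz}(G))$, hence for $\chi(\mathcal A(G);x,y)$ by the middle equality. Applying instead the counting (cardinality) valuation in the case of finite $A,B$ with $|A|=p$, $|B|=q$, the same identity gives $\omega(G;p,q)=|\Omega_{\rm nz}|=\sum_{X\subseteq E}(-1)^{|X|}|\Omega_X|$, and substituting $|\Omega_X|=p^{r\langle E\rangle-r\langle X\rangle}q^{n\langle X^c\rangle}$ shows this equals (\ref{Omega-xy-Expansion}) evaluated at $(x,y)=(p,q)$. This is the first equality in (\ref{Omega-pq-Identity}), and it simultaneously certifies that the counting function is the claimed polynomial, independent of the group structures of $A,B$.

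I expect the only real subtlety to be the bookkeeping that makes the two valuation computations land on the \emph{same} monomials: one must verify that the combinatorial exponents $r\langle E\rangle-r\langle X\rangle$ and $n\langle X^c\rangle$ governing the finite cardinality $|\Omega_X|$ coincide with the ranks $\rk T_X$ and $\rk F_X$ feeding the product valuation $\lambda$, and that no spurious torsion factor enters in the $\mathbbm{k}=\Bbb Z$ case. Once this matching is secured, the reconciliation of the edge-subset sum with the semilattice M\"obius sum of (\ref{Lambda-Formula}) is automatic, since both expressions compute the single quantity $\lambda(\Omega_{\rm nz}(G))$, so no further argument is needed.
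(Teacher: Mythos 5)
Your proposal is correct and follows essentially the same route as the paper's own proof: the middle equality is quoted from the product-arrangement theorem (\ref{Lambda-Formula}), the indicator identity $1_{\Omega_{\rm nz}}=\sum_{X\subseteq E}(-1)^{|X|}1_{\Omega_X}$ is obtained by inclusion--exclusion over edge subsets, and then both the product valuation $\lambda$ and the counting measure are applied termwise to that single identity, using $|\Omega_X|=p^{r\langle E\rangle-r\langle X\rangle}q^{n\langle X^c\rangle}$ and the matching rank computation for $\lambda(\Omega_X)$. Your added checks (torsion-freeness of $T/T_X$ and $F/F_X$, and the invocation of Groemer's extension theorem) merely make explicit details the paper leaves implicit, so no further comparison is needed.
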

\begin{proof}
It follows from Theorem~\ref{Lambda-Formula} that $\chi({\mathcal
A}(G);x,y) =\lambda(\Omega_{\rm nz}(G))$. Recall
(\ref{Omega-NZ-Complement}) and apply inclusion-exclusion; we have
\begin{equation}\label{Omega-NZ-Inclusion-Exclusion}
\mbox{\large 1}_{\Omega_{\rm nz}}=\mbox{\large
1}_{\Omega-\bigcup_{e\in E} T_e\times F_e} =\sum_{X\subseteq
E}(-1)^{|X|} \, \mbox{\large 1}_{T_X\times F_X}.
\end{equation}
Note that for each subset $X\subseteq E$, if $|A|=p$ and $|B|=q$,
then
\[
|T_X(G,\varepsilon;A)\times F_X(G,\varepsilon;B)| = p^{r\langle
E\rangle - r\langle X\rangle} q^{n\langle X^c\rangle} .
\]
Applying the valuation $\lambda$ and the counting measure to both
sides of (\ref{Omega-NZ-Inclusion-Exclusion}), the formulas
(\ref{Omega-pq-Identity}) and (\ref{Omega-xy-Expansion}) follow
immediately.
\end{proof}

\begin{remark}
The formula (\ref{Omega-pq-Identity}) means that the counting
function $\omega(G;p,q)$ depends only on the orders of abelian
groups $ A,B$, not on their group structures.
\end{remark}

\section{Weighted integral complementary polynomial}

Let $A,B$ be abelian groups. A tension-flow $(f,g)\in
\Omega(G,\varepsilon;A,B)$ is said to be {\em complementary} if
$\supp g=\ker f$, also called {\em parabolic} because $(f,g)$ are
not allowed to have the zero value $(0,0)$ and to have nonzero
values simultaneously for both $f$ and $g$. The {\em complementary
space} of $(G,\varepsilon)$ is
\begin{equation}
K(G,\varepsilon;A,B):=\{(f,g)\in \Omega(G,\varepsilon;A,B)\:|\:\supp
g=\ker f\}.
\end{equation}
If $A,B$ are a commutative ring $R$ without zero divisors, then
\[
K(G,\varepsilon;R) =\{(f,g)\in\Omega\:|\: f(e) g(e)= 0,
f(e)+g(e)\neq 0, e\in E\}.
\]
If $R={\Bbb R}$, we simply write $K(G,\varepsilon)$ for
$K(G,\varepsilon;{\Bbb R})$. Analogously, we write
$T(G,\varepsilon)$, $T_{\mathbbm z}(G,\varepsilon)$,
$F(G,\varepsilon)$, $F_{\mathbbm z}(G,\varepsilon)$,
$\Omega(G,\varepsilon)$, $\Omega_{\mathbbm z}(G,\varepsilon)$ for
$T(G,\varepsilon;{\Bbb R})$, $T(G,\varepsilon;{\Bbb Z})$,
$F(G,\varepsilon;{\Bbb R})$, $F(G,\varepsilon;{\Bbb Z})$,
$\Omega(G,\varepsilon;{\Bbb Z})$, $\Omega(G,\varepsilon;{\Bbb R})$,
respectively.

It is well-known that $T(G,\varepsilon)$ and $F(G,\varepsilon)$ are
orthogonal complements in ${\Bbb R}^E$ under the obvious inner
product $\langle f,g\rangle =\sum_{e\in E} f(e)g(e)$. So there is a
natural vector space isomorphism $\Omega(G,\varepsilon) \simeq {\Bbb
R}^E$, $(f,g)\mapsto f+g$. The corresponding lattice satisfies the
relation
\begin{equation}
\Omega_{\mathbbm{z}} = T_{\mathbbm{z}}\times F_{\mathbbm{z}} \simeq
T_{\mathbbm{z}}\oplus F_{\mathbbm{z}} \subseteq {\Bbb Z}^E.
\end{equation}
Then ${\Bbb Z}^E/(T_{\mathbbm{z}}\oplus F_{\mathbbm{z}})$ is a
finite abelian group, whose cardinality is the number of maximal
forests of $G$; see \cite{Bollobas1} (Theorem 9, p.53) and
\cite{Bondy-Murty1} (Corollary 12.4, p.219).

We introduce a relatively open 0-1 non-convex polyhedron, called the
{\em complementary polyhedron},
\begin{equation}
\Delta_\textsc{ctf}(G,\varepsilon):=\{(f,g)\in K(G,\varepsilon):
0<|f+g|<1\},
\end{equation}
and a relatively open 0-1 convex polytope, called the {\em
complementary polytope} (with respect to $\varepsilon$),
\begin{equation}
\Delta^+_\textsc{ctf}(G,\varepsilon):=\{(f,g)\in
K(G,\varepsilon)\:|\: 0<f+g<1\}.
\end{equation}
Let $\varrho$ be an orientation on $G$. It is clear that the
polyhedron $\Delta_\textsc{ctf}(G,\varepsilon)$ contains the
relatively open 0-1 convex polytope
\begin{equation}
\Delta^\varrho_\textsc{ctf}(G,\varepsilon):=\{(f,g)\in
\Delta_\textsc{ctf}(G,\varepsilon)\::\:
[\varrho,\varepsilon](f+g)>0\},
\end{equation}
which is lattice polyhedral isomorphic to
$\Delta_\textsc{ctf}^+(G,\varrho)$ by the involution map
\[
P_{\varrho,\varepsilon}:({\Bbb R}\times{\Bbb R})^E\rightarrow({\Bbb
R}\times{\Bbb R})^E,\sp (f,g)\mapsto
(P_{\varrho,\varepsilon}f,P_{\varrho,\varepsilon}g).
\]

It is clear that the union $C(G,\varrho)$ of all directed circuits
of $(G,\varrho)$ forms a strong subgraph (equivalently totally
cyclic). The union $B(G,\varrho)$ of all directed bonds of
$(G,\varrho)$ forms an acyclic directed subgraph. Since each
directed circuit is edge-disjoint from any directed bond, we see
that $B(G,\varrho)$ and $C(G,\varrho)$ are edge-disjoint. Let
$B_\varrho$ and $C_\varrho$ be the edges sets of $B(G,\varrho)$ and
$C(G,\varrho)$, respectively. Then $B_\varrho$ and $C_\varrho$ are
complements in $E$; see Proposition~3.1 of \cite{Chen-Dual}. Thus
the digraph $(G,\varrho)$ is naturally decomposed into the
edge-disjoint directed subgraphs $B(G,\varrho)$ and $C(G,\varrho)$.
We introduce the relatively open 0-1 convex polytopes
\begin{align}
\Delta^+_{\textsc{tn}}(G,B_\varrho): &=\{f\in T(G,\varrho)
: 0<f|_{B_\varrho}<1, f|_{C_\varrho}=0\},\\
\Delta^+_{\textsc{fl}}(G,C_\varrho): &=\{g\in F(G,\varrho) :
g|_{B_\varrho}=0, 0<g|_{C_\varrho}<1\}.
\end{align}
Then the polytope $\Delta_\textsc{ctf}^+(G,\varrho)$ is decomposed
into the product
\begin{equation}
\Delta_\textsc{ctf}^+(G,\varrho)=\Delta^+_{\textsc{tn}}(G,B_\varrho)
\times \Delta^+_{\textsc{fl}}(G,C_\varrho). \label{Delta-Product}
\end{equation}

\begin{prop}
{\rm (a)} $B(G,\varrho)$ is acyclic, $C(G,\varrho)$ is totally
cyclic.

{\rm (b)} $B_\varrho\cap C_\varrho=\emptyset$, $B_\varrho\cup
C_\varrho=E$.

{\rm (c)} $q\Delta^+_{\textsc{tn}}(G,B_\varrho)\simeq
q\Delta^+_{\textsc{tn}}(G/C_\varrho,\varrho)$,
$q\Delta^+_{\textsc{fl}}(G,C_\varrho)\simeq
q\Delta^+_{\textsc{fl}}(G\backslash B_\varrho,\varrho)$.
\end{prop}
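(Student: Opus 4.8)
The plan is to prove the three parts in sequence, relying on the structural decomposition of a digraph into its ``cyclic part'' and ``acyclic part.'' The key fact underlying everything is that in any digraph $(G,\varrho)$, every edge lies in a directed circuit or in a directed bond, but never in both; this is the standard arc-painting/conformal-decomposition dichotomy (Minty's lemma), and it is exactly the content that the excerpt attributes to Proposition~3.1 of \cite{Chen-Dual}.

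\textbf{Part (a).} First I would show $C(G,\varrho)$ is totally cyclic. By definition $C(G,\varrho)$ is the union of all directed circuits, so every edge of $C_\varrho$ lies on a directed circuit of $(G,\varrho)$; since each such circuit is entirely contained in $C(G,\varrho)$, every edge of the subgraph lies on a directed circuit \emph{within} the subgraph, which is precisely the definition of totally cyclic (strongly connected on each component). Dually, to see $B(G,\varrho)$ is acyclic, I would argue by contradiction: a directed circuit inside $B(G,\varrho)$ would share an edge with some directed bond, contradicting the edge-disjointness of directed circuits from directed bonds. This edge-disjointness is the observation already recorded in the text immediately before the proposition, namely that a directed circuit and a directed bond cannot share an edge (the circuit crosses any cut an equal number of times in each direction, so it cannot use an edge of a one-directional bond).

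\textbf{Part (b).} The disjointness $B_\varrho\cap C_\varrho=\emptyset$ is immediate from the edge-disjointness of bonds and circuits just invoked. For the covering $B_\varrho\cup C_\varrho=E$, I would apply the painting dichotomy to each edge $e$: given $e$, Minty's lemma guarantees either a directed circuit through $e$ (placing $e\in C_\varrho$) or a directed bond through $e$ (placing $e\in B_\varrho$). Together with part (b)'s disjointness this gives the disjoint partition $E=B_\varrho\sqcup C_\varrho$, and I would simply cite Proposition~3.1 of \cite{Chen-Dual} to shortcut this classical argument.

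\textbf{Part (c).} This is where the real work lies, and I expect it to be the main obstacle. I would establish the two lattice-polytope isomorphisms by analyzing the defining conditions. For the tension side, a tension $f\in T(G,\varrho)$ with $f|_{C_\varrho}=0$ vanishes on every edge of the totally cyclic part; since contracting $C_\varrho$ identifies exactly the vertices joined by circuits, the restriction map $f\mapsto f|_{B_\varrho}$ should give a bijection between $\{f\in T(G,\varrho): f|_{C_\varrho}=0\}$ and $T(G/C_\varrho,\varrho)$, because a function on $B_\varrho$ satisfies the tension (coboundary) conditions in $G$ precisely when it does so in the contraction $G/C_\varrho$. The dilation by $q$ and the rationality of vertices then make this a lattice isomorphism of the two dilated open polytopes. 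Dually, a flow $g\in F(G,\varrho)$ with $g|_{B_\varrho}=0$ is supported on $C_\varrho$, and since deleting $B_\varrho$ does not disturb the conservation law on the remaining edges, restriction gives $\{g\in F(G,\varrho): g|_{B_\varrho}=0\}\cong F(G\backslash B_\varrho,\varrho)$. The delicate point I would have to verify carefully is that these restriction maps genuinely preserve the \emph{integer lattice structure} after dilation by $q$ — i.e.\ that lattice points correspond to lattice points in both directions — which is where I would need the fact that $C(G/C_\varrho,\varrho)$ is contracted to nothing and $B(G\backslash B_\varrho,\varrho)$ is deleted, so that $B_\varrho$ and $C_\varrho$ play symmetric roles as the relevant edge sets of the minor. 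Checking that no lattice points are lost or gained under contraction/deletion, using that ${\Bbb Z}^E/(T_{\mathbbm z}\oplus F_{\mathbbm z})$ is finite, is the technical heart of the argument.
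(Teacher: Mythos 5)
Your proposal is correct and follows essentially the same route as the paper: the same contradiction argument for (a), the circuit-versus-directed-cut dichotomy for (b), and the restriction maps $f\mapsto f|_{B_\varrho}$, $g\mapsto g|_{C_\varrho}$ for (c). Two remarks. First, for the covering $B_\varrho\cup C_\varrho=E$ in (b), the paper does not cite Minty's lemma but proves it from scratch: for an edge $e\notin C_\varrho$ oriented from $u$ to $v$, it takes $V_1$ to be the set of vertices admitting a directed path to $u$, checks that $v\notin V_1$ (else $e$ would lie on a directed circuit) and that $[V_1,V_1^c]$ is a directed cut containing $e$; your citation of Proposition~3.1 of \cite{Chen-Dual} is legitimate (the paper itself invokes it in the text immediately preceding the proposition), but the self-contained argument is short. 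Second, and more substantively, the ``technical heart'' you flag in (c) is a non-issue, and the tool you propose for it is the wrong one: the finiteness of ${\Bbb Z}^E/(T_{\mathbbm{z}}\oplus F_{\mathbbm{z}})$ (which counts maximal forests) has nothing to do with lattice-point preservation. The correspondence is immediate: a tension of $G$ vanishing on $C_\varrho$ is the coboundary of a potential that is constant on each vertex class of the contraction $G/C_\varrho$, so restriction to $B_\varrho$ and extension by zero are mutually inverse bijections between $\{f\in T(G,\varrho)\,:\,f|_{C_\varrho}=0\}$ and $T(G/C_\varrho,\varrho)$, and both maps visibly preserve integer-valuedness; dually, deleting edges on which a flow vanishes does not disturb the conservation law at any vertex, so the same holds on the flow side with $G\backslash B_\varrho$. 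The paper's proof of (c) is exactly this one-line observation, with no appeal to any finiteness result.
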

\begin{proof}
(a) It is trivial that $C(G,\varrho)$ is totally cyclic. Suppose
$B(G,\varrho)$ contains a directed circuit $C$. Fix an edge $e_1\in
C$ and a directed bond $B_1=[V_1,V_1^c]$ such that $e_1\in B_1$.
Since $C$ is a closed path, there exists an edge $e_2\in C$ other
than $e_1$ such that $e_2\in B_1$. Then the orientation of $e_2$ in
$C$ is opposite to the orientation of $e_2$ in $B_1$; this is a
contradiction. So $B(G,\varrho)$ is acyclic.

(b) It is trivial that $B_\varrho\cap C_\varrho=\emptyset$. Let $e$
be an edge of $G$ such that $e\not\in C_\varrho$. Then $e$ cannot be
a loop. We may assume that the orientation of $e$ in $G$ is from its
one end-vertex $u$ to the other end-vertex $v$. Let $V_1$ be the set
of vertices from which there is a directed path to the vertex $u$;
the length of the path is allowed to be zero, so that $u\in V_1$. It
is clear that $v\not\in V_1$; otherwise, there is a directed path
$P$ from $v$ to $u$, then $ueP$ is a directed circuit containing the
edge $e$; this is a contradiction. Thus $[V_1,V_1^c]$ is a cut and
contains the edge $e$. We claim that $[V_1,V_1^c]$ is directed cut
from $V_1$ to $V_1^c$. Suppose there is an edge $e_1\in[V_1,V_1^c]$
whose orientation is from a vertex $v_1\in V_1^c$ to a vertex
$u_1\in V_1$. Let $P_1$ be a directed path from $u_1$ to $u$. Then
$v_1e_1P_1$ is a directed path from $v_1$ to $u$, so $v_1\in V_1$;
this is a contradiction.

(c) Identify the edge set of $G/C_\varrho$ as the edge subset
$B_\varrho$. The first polyhedral isomorphism is given by $f\mapsto
f|_{B_\varrho}$, sending lattice points to lattice points. The
second polyhedral isomorphism is given by $f\mapsto f|_{C_\varrho}$,
also sending lattice points to lattice points.
\end{proof}

Let $p,q$ be positive integers. Recall the polynomial counting
functions (introduced in \cite{Chen-Dual})
\begin{align}
\kappa_{\mathbbm{z}}(G;p,q):
&=|(p,q)\Delta_\textsc{ctf}(G,\varepsilon)\cap({\Bbb Z}^2)^E|, \\
\kappa_{\varrho}(G;p,q):
&= |(p,q)\Delta^+_\textsc{ctf}(G,\varrho)\cap({\Bbb Z}^2)^E|, \\
\bar\kappa_{\varrho}(G;p,q):
&=|(p,q)\bar\Delta^+_\textsc{ctf}(G,\varrho)\cap({\Bbb Z}^2)^E|,
\end{align}
and the product formulas
\begin{align}
\kappa_{\varrho}(G;x,y) & =\tau_{\varrho}(G,B_\varrho;x)
\, \varphi_{\varrho}(G,C_\varrho;y), \label{Kappa-Phi-Tau}\\
\bar\kappa_{\varrho}(G;x,y) & =\bar\tau_{\varrho}(G,B_\varrho;x) \,
\bar\varphi_{\varrho}(G,C_\varrho;y), \label{Bar-Kappa-Phi-Tau}
\end{align}
where $\tau_\varrho(G,B_\varrho;x)$,
$\bar\tau_\varrho(G,B_\varrho;x)$, $\varphi_\varrho(G,\varrho;y)$,
$\bar\varphi_\varrho(G,\varrho;y)$ are the Ehrhart polynomials of
$\Delta^+_\textsc{tn}(G,B_\varrho)$,
$\bar\Delta^+_\textsc{tn}(G,B_\varrho)$,
$\Delta^+_\textsc{fl}(G,C_\varrho)$,
$\bar\Delta^+_\textsc{fl}(G,C_\varrho)$, respectively. For
elementary properties about Ehrhart polynomials, we refer to
\cite{Chen-Lattice-Points,Chen-Ehrhart-Polynomial,Stanley2}.

Now for arbitrary integers $r,s$, we introduce the weighted counting
function
\begin{equation}\label{Z-Gamma-PQRS}
\psi_{\mathbbm{z}}(G;p,q,r,s): =\sum_{(f,g)\in({\Bbb Z}^2)^E
(p,q)\Delta_\textsc{ctf}(G,\varepsilon)}
 r^{|\supp f|} s^{|\supp g|},
\end{equation}
which turns out to be a polynomial function of $p,q,r,s$, and is
independent of the chosen orientation $\varepsilon$, called the {\em
weighted integral complementary polynomial} of $G$. When $r,s=1$,
the polynomial $\psi_{\mathbbm{z}}(G;x,y,1,1)$ reduces to the
integral complementary polynomial $\kappa_\mathbbm{z}(G;x,y)$ in
\cite{Chen-Dual} and $I_G(x,y)$ in \cite{Kochol3}. To understand the
information encoded in $\psi_{\mathbbm{z}}(G;x,y,z,w)$, especially
the combinatorial interpretation for the values of the polynomial at
negative integers of $x,y$, we further introduce the weighted
counting function
\begin{equation}
\bar\psi_{\mathbbm{z}}(G;p,q,r,s): =\sum_{\varepsilon\in{\mathcal
O}(G)} r^{|B_\varepsilon|} s^{|C_\varepsilon|} |({\Bbb Z}^2)^E\cap
(p,q)\bar\Delta^+_\textsc{ctf}(G,\varepsilon)|,
\end{equation}
which turns out to be a polynomial function of non-negative integers
$p,q$ and arbitrary integers $r,s$, called the {\em weighted dual
integral complementary polynomial} of $G$. When $r,s=1$, the
polynomial $\bar\psi_\mathbbm{z}(G;x,y,1,1)$ reduces to the dual
integral complementary polynomial $\bar\kappa_\mathbbm{z}(G;x,y)$ in
\cite{Chen-Dual}.

\begin{thm}
The integer-valued function $\psi_{\mathbbm{z}}(G;p,q,r,s)$
$(\bar\psi_{\mathbbm{z}}(G;p,q,r,s))$ is a polynomial function of
positive (non-negative) integers $p,q$ and arbitrary integers $r,s$.
Furthermore,
\begin{align}
\psi_{\mathbbm{z}}(G;x,y,z,w) & =\sum_{\varrho\in{\mathcal O}(G)}
z^{|B_\varrho|}\, w^{|C_\varrho|}
\kappa_{\varrho}(G;x,y), \label{Psi-xyzt}\\
\bar\psi_{\mathbbm{z}}(G;x,y,z,w) & =\sum_{\varrho\in{\mathcal
O}(G)} z^{|B_\varrho|}\, w^{|C_\varrho|}
\bar\kappa_{\varrho}(G,x,y), \label{Bar-Psi-xyzt}
\end{align}
and satisfy the {\em Reciprocity Law:}
\begin{align}
\psi_{\mathbbm{z}}(G;-x,-y,z,w) & =(-1)^{n(G)}
\bar\psi_{\mathbbm{z}}(G;x,y,-z,w), \label{psi-Z-xyz}\\
&=(-1)^{r(G)}\bar\psi_{\mathbbm{z}}(G;x,y,z,-w). \label{psi-Z-xyt}
\end{align}
In particular, $\psi_{\mathbbm{z}}(G;x,y,0,0)
=\bar\psi_{\mathbbm{z}}(G;x,y,0,0)=0$,
\begin{align}
\psi_{\mathbbm{z}}(G;x,y,1,0)&=\tau_\mathbbm{z}(G,x), &&
\bar\psi_{\mathbbm{z}}(G;x,y,1,0)=\bar\tau_\mathbbm{z}(G,x),\\
\psi_{\mathbbm{z}}(G;x,y,0,1) &=\varphi_\mathbbm{z}(G,y), &&
\bar\psi_{\mathbbm{z}}(G;x,y,0,1) =\bar\varphi_\mathbbm{z}(G,y),\\
\psi_{\mathbbm{z}}(G;x,y,1,1) &=\kappa_\mathbbm{z}(G;x,y),&&
\bar\psi_{\mathbbm{z}}(G;x,y,1,1) =\bar\kappa_\mathbbm{z}(G;x,y).
\end{align}
\end{thm}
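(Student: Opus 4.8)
The plan is to reduce everything to the orientation-by-orientation product structure already recorded in (\ref{Delta-Product}) together with the product formulas (\ref{Kappa-Phi-Tau}) and (\ref{Bar-Kappa-Phi-Tau}), and then to feed in Ehrhart reciprocity one factor at a time. First I would establish (\ref{Psi-xyzt}). Every lattice point $(f,g)\in({\Bbb Z}^2)^E\cap(p,q)\Delta_{\textsc{ctf}}(G,\varepsilon)$ satisfies $(f+g)(e)\neq 0$ for every edge $e$, so the sign pattern of $f+g$ relative to $\varepsilon$ singles out a unique orientation $\varrho$; this partitions $\Delta_{\textsc{ctf}}(G,\varepsilon)$ into the disjoint convex pieces $\Delta^\varrho_{\textsc{ctf}}(G,\varepsilon)$ indexed by $\varrho\in{\mathcal O}(G)$. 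On each piece the decomposition (\ref{Delta-Product}) forces $\supp f=B_\varrho$ and $\supp g=C_\varrho$, so the weight $r^{|\supp f|}s^{|\supp g|}=r^{|B_\varrho|}s^{|C_\varrho|}$ is constant there. Pulling each piece back by the lattice isomorphism $P_{\varrho,\varepsilon}$ onto $\Delta^+_{\textsc{ctf}}(G,\varrho)$ identifies its lattice-point count with $\kappa_\varrho(G;p,q)$, and summing over $\varrho$ yields (\ref{Psi-xyzt}). Formula (\ref{Bar-Psi-xyzt}) is then merely the definition of $\bar\psi_{\mathbbm{z}}$ rewritten through $\bar\kappa_\varrho$. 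Since (\ref{Kappa-Phi-Tau}) and (\ref{Bar-Kappa-Phi-Tau}) exhibit $\kappa_\varrho$ and $\bar\kappa_\varrho$ as products of Ehrhart polynomials, these two formulas already prove that $\psi_{\mathbbm{z}}$ and $\bar\psi_{\mathbbm{z}}$ are polynomials in $x,y,z,w$.

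For the Reciprocity Law I would apply Ehrhart--Macdonald reciprocity to each factor in (\ref{Kappa-Phi-Tau}), namely $\tau_\varrho(G,B_\varrho;-x)=(-1)^{d_T}\bar\tau_\varrho(G,B_\varrho;x)$ and $\varphi_\varrho(G,C_\varrho;-y)=(-1)^{d_F}\bar\varphi_\varrho(G,C_\varrho;y)$, where $d_T=\dim\Delta^+_{\textsc{tn}}(G,B_\varrho)$ and $d_F=\dim\Delta^+_{\textsc{fl}}(G,C_\varrho)$. Because $\Delta^+_{\textsc{tn}}(G,B_\varrho)$ lies in the tension subgroup $T_{C_\varrho}$ and is there cut out by the strict box inequalities on the coordinates of $B_\varrho$ alone, it is, when nonempty, full-dimensional in $T_{C_\varrho}$, so $d_T=r\langle E\rangle-r\langle C_\varrho\rangle$; likewise $d_F=n\langle C_\varrho\rangle=|C_\varrho|-r\langle C_\varrho\rangle$ inside $F_{B_\varrho}$. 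Hence $d_T+d_F\equiv r\langle E\rangle+|C_\varrho|\pmod 2$, and since $|C_\varrho|=|E|-|B_\varrho|$ with $|E|=r\langle E\rangle+n\langle E\rangle$, this is simultaneously $\equiv n\langle E\rangle+|B_\varrho|\pmod 2$. Multiplying the two reciprocities gives $\kappa_\varrho(G;-x,-y)=(-1)^{d_T+d_F}\bar\kappa_\varrho(G;x,y)$. Substituting into (\ref{Psi-xyzt}) and absorbing $(-1)^{|B_\varrho|}$ into $z$ leaves the global sign $(-1)^{n(G)}$ and yields (\ref{psi-Z-xyz}); absorbing $(-1)^{|C_\varrho|}$ into $w$ instead leaves $(-1)^{r(G)}$ and yields (\ref{psi-Z-xyt}). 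Orientations whose polytope is empty contribute $0=0$ and are harmless.

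Finally the listed specializations come from reading off (\ref{Psi-xyzt}). Setting $w=0$ keeps only the acyclic orientations ($C_\varrho=\emptyset$), for which $\varphi_\varrho(G,\emptyset;y)=1$ and hence $\kappa_\varrho(G;x,y)=\tau_\varrho(G,B_\varrho;x)$, and summing over acyclic orientations recovers $\tau_{\mathbbm{z}}(G,x)$; setting $z=0$ keeps only the totally cyclic orientations and recovers $\varphi_{\mathbbm{z}}(G,y)$; setting $z=w=1$ recovers $\kappa_{\mathbbm{z}}(G;x,y)$ through the disjoint decomposition of $\Delta_{\textsc{ctf}}$; and $z=w=0$ gives $0$ once $E\neq\emptyset$, since then $|B_\varrho|+|C_\varrho|>0$ for every $\varrho$. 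The analogous statements for $\bar\psi_{\mathbbm{z}}$ follow identically from the closed polytopes and (\ref{Bar-Kappa-Phi-Tau}). I expect the main obstacle to be the parity bookkeeping of the middle paragraph: verifying that $d_T+d_F$ is congruent both to $n(G)+|B_\varrho|$ and to $r(G)+|C_\varrho|$ modulo $2$, and that these two regroupings of the single sign $(-1)^{d_T+d_F}$ are precisely what converts $\kappa_\varrho(G;-x,-y)=(-1)^{d_T+d_F}\bar\kappa_\varrho(G;x,y)$ into the two stated reciprocity identities.
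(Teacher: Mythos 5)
Your proposal is correct and follows essentially the same route as the paper: decompose the lattice points of $(p,q)\Delta_{\textsc{ctf}}(G,\varepsilon)$ into the pieces $\Delta^\varrho_{\textsc{ctf}}(G,\varepsilon)$ indexed by orientations, observe that the weight $r^{|\supp f|}s^{|\supp g|}=r^{|B_\varrho|}s^{|C_\varrho|}$ is constant on each piece, apply the product formulas (\ref{Kappa-Phi-Tau}) and (\ref{Bar-Kappa-Phi-Tau}), and deduce the reciprocity law by absorbing the per-orientation sign into $z$ or into $w$. The only difference is one of self-containedness: where the paper cites Lemma~3.2 and Proposition~3.3(c) of \cite{Chen-Dual} for the orientation decomposition and for the identity $\kappa_\varrho(G;-x,-y)=(-1)^{r(G)+|C_\varrho|}\bar\kappa_\varrho(G;x,y)$, you rederive these in place (the former by the sign-pattern argument, the latter by Ehrhart--Macdonald reciprocity together with a dimension count), which is how those cited results are established anyway.
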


\begin{proof} Let $p,q$ be positive integers. Then
(\ref{Psi-xyzt}) follows from the decomposition (see Lemma~3.2(a) of
\cite{Chen-Dual})
\[
({\Bbb Z}^2)^E\cap (p,q)\Delta_\textsc{ctf}(G,\varepsilon)
=\bigsqcup_{\varrho\in{\mathcal O}(G)} ({\Bbb Z}^2)^E\cap
(p,q)\Delta^\varrho_\textsc{ctf}(G,\varepsilon),
\]
the identification $\Delta^\varrho_\textsc{ctf}(G,\varepsilon)
=P_{\varrho,\varepsilon}\Delta^+_\textsc{ctf}(G,\varrho)$ (see
Lemma~3.2(b) of \cite{Chen-Dual}), and the product formula
(\ref{Kappa-Phi-Tau}). The formula (\ref{Bar-Psi-xyzt}) follows from
the definition of $\bar\psi_\mathbbm{z}$ and
(\ref{Bar-Kappa-Phi-Tau}). The Reciprocity Law follows from the
reciprocity law of the polynomials $\psi_\varrho$ and
$\bar\psi_\varrho$ in the following lemma.
\end{proof}

\begin{lem}\label{Psi-Rho}
For each orientation $\varrho$ on $G$, define
\begin{align}
\psi_\varrho(G;x,y,z,w):&=z^{|B_\varrho|}w^{|C_\varrho|}
\kappa_\varrho(G;x,y),\\
\bar\psi_\varrho(G;x,y,z,w):&=z^{|B_\varrho|}w^{|C_\varrho|}
\bar\kappa_\varrho(G;x,y).
\end{align}
Then $\psi_\varrho$ and $\bar\psi_\varrho$ satisfy the {\rm
Reciprocity Law:}
\begin{align}
\psi_\varrho(G;-x,-y,z,w) &= (-1)^{n(G)}
\bar\psi_\varrho(G;x,y,-z,w) \label{Psi-Rho-Reciprocity}\\
&= (-1)^{r(G)} \bar\psi_\varrho(G;x,y,z,-w).
\label{Bar-Psi-Rho-Reciprocity}
\end{align}
\end{lem}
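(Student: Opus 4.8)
The plan is to reduce the two displayed identities to Ehrhart--Macdonald reciprocity applied separately to the two factor polytopes of the product decomposition \eqref{Delta-Product}, and then to reconcile the signs by a short parity computation involving the ranks of the bond/circuit decomposition. The heart of the argument is the reciprocity for the tension and flow polytopes individually; the rest is bookkeeping.

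First I would unfold the left-hand side. By the definition of $\psi_\varrho$ together with the product formula \eqref{Kappa-Phi-Tau},
\[
\psi_\varrho(G;-x,-y,z,w)=z^{|B_\varrho|}w^{|C_\varrho|}\,\tau_\varrho(G,B_\varrho;-x)\,\varphi_\varrho(G,C_\varrho;-y),
\]
where $\tau_\varrho$ and $\varphi_\varrho$ are the Ehrhart polynomials of the relatively open lattice polytopes $\Delta^+_{\textsc{tn}}(G,B_\varrho)$ and $\Delta^+_{\textsc{fl}}(G,C_\varrho)$, while $\bar\tau_\varrho,\bar\varphi_\varrho$ are those of their closures. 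Treating each open polytope as the relative interior of its closed counterpart, Ehrhart--Macdonald reciprocity gives
\[
\tau_\varrho(G,B_\varrho;-x)=(-1)^{d_T}\,\bar\tau_\varrho(G,B_\varrho;x),\qquad
\varphi_\varrho(G,C_\varrho;-y)=(-1)^{d_F}\,\bar\varphi_\varrho(G,C_\varrho;y),
\]
with $d_T,d_F$ the respective dimensions.

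Next I would pin down those dimensions. By part (c) of the preceding Proposition the polytope $\Delta^+_{\textsc{tn}}(G,B_\varrho)$ spans the tension subgroup of tensions vanishing on $C_\varrho$, so $d_T=r\langle E\rangle-r\langle C_\varrho\rangle$; likewise $\Delta^+_{\textsc{fl}}(G,C_\varrho)$ spans the flow subgroup of flows vanishing on $B_\varrho$, so $d_F=n\langle C_\varrho\rangle=|C_\varrho|-r\langle C_\varrho\rangle$ (these are exactly the exponents in the subgroup-size formula of Section~3). Substituting into the displayed product and invoking \eqref{Bar-Kappa-Phi-Tau} yields
\[
\kappa_\varrho(G;-x,-y)=(-1)^{d_T+d_F}\,\bar\kappa_\varrho(G;x,y),\qquad d_T+d_F=r(G)+|C_\varrho|-2\,r\langle C_\varrho\rangle .
\]
Finally I would settle the signs using the parity identity $d_T+d_F\equiv r(G)+|C_\varrho|\equiv n(G)+|B_\varrho|\pmod 2$, the last congruence following from $r(G)+n(G)=|E|=|B_\varrho|+|C_\varrho|$. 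Absorbing $(-1)^{|B_\varrho|}$ into $(-z)^{|B_\varrho|}$ produces \eqref{Psi-Rho-Reciprocity}, and absorbing $(-1)^{|C_\varrho|}$ into $(-w)^{|C_\varrho|}$ produces \eqref{Bar-Psi-Rho-Reciprocity}.

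I expect the main obstacle to be the verification, needed in the second step, that each relatively open polytope $\Delta^+$ really is the relative interior of its closure $\bar\Delta^+$, so that reciprocity applies with precisely the dimensions $d_T,d_F$ recorded above. Concretely this means checking that every defining inequality $0\le f_e\le 1$ ($e\in B_\varrho$, resp.\ $e\in C_\varrho$) is attained on the polytope, so that no extra boundary appears in passing from $\Delta^+$ to $\bar\Delta^+$; this should follow from the structure of the bond/circuit polytopes established in \cite{Chen-Dual}. Once that fact is in hand, every remaining step is purely formal sign tracking.
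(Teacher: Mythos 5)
Your proposal is correct, but it follows a genuinely different route from the paper's. The paper's proof is essentially two lines: it quotes the reciprocity $\kappa_\varrho(G;-x,-y)=(-1)^{r(G)+|C_\varrho|}\bar\kappa_\varrho(G;x,y)$ wholesale from Proposition~3.3(c) of \cite{Chen-Dual}, absorbs $(-1)^{|C_\varrho|}$ into $(-w)^{|C_\varrho|}$ to get \eqref{Bar-Psi-Rho-Reciprocity}, and then uses $r(G)+n(G)=|E|=|B_\varrho|+|C_\varrho|$ to convert the sign $(-1)^{r(G)+|C_\varrho|}$ into $(-1)^{n(G)+|B_\varrho|}$ and obtain \eqref{Psi-Rho-Reciprocity}. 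You instead re-derive the quoted ingredient from first principles: you factor $\kappa_\varrho$ and $\bar\kappa_\varrho$ through the product decomposition \eqref{Delta-Product} via \eqref{Kappa-Phi-Tau} and \eqref{Bar-Kappa-Phi-Tau}, apply Ehrhart--Macdonald reciprocity to each factor, and compute $d_T+d_F=r(G)+|C_\varrho|-2r\langle C_\varrho\rangle\equiv r(G)+|C_\varrho|\pmod 2$, which explains exactly where the exponent in the cited sign comes from; your closing parity bookkeeping then coincides with the paper's. What the paper's route buys is brevity and consistency with its companion paper; what yours buys is self-containedness and a conceptual reading of the sign as a dimension count for the two factor polytopes. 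Two small remarks. First, the ``main obstacle'' you flag is not really one: once $\Delta^+$ is nonempty, every point of $\bar\Delta^+$ is a limit along segments from a point of $\Delta^+$ (each coordinate inequality stays strict on the open segment), so $\bar\Delta^+$ is the closure of $\Delta^+$ and $\Delta^+$ is its relative interior by elementary convexity; nonemptiness holds because $(G/C_\varrho,\varrho)$ is acyclic and $(\langle C_\varrho\rangle,\varrho)$ is totally cyclic, by parts (a) and (c) of the Proposition preceding the lemma. Second, for Ehrhart--Macdonald reciprocity to produce honest polynomials (rather than quasi-polynomials) with the stated sign you also need the closed polytopes to be lattice polytopes; this follows from total unimodularity of the tension and flow constraint systems and is part of what \cite{Chen-Dual} establishes, so your appeal to that paper should be understood as covering integrality as well as the facial structure.
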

\begin{proof}
Recall $\kappa_\varrho(G;-x,-y)=(-1)^{r(G)+|C_\varrho|}
\bar\kappa_\varrho(G,x,y)$ from Proposition~3.3(c) of
\cite{Chen-Dual}. It follows that
\begin{align*}
\psi_\varrho(G;-x,-y,z,w) &=(-1)^{r(G)+|C_\varrho|}
z^{|B_\varrho|}w^{|C_\varrho|} \bar\kappa_\varrho(G;x,y) \\
&=(-1)^{r(G)} z^{|B_\varrho|}(-w)^{|C_\varrho|}
\bar\kappa_\varrho(G;x,y) \\
&= (-1)^{r(G)}\bar\psi_\varrho(G;x,y,z,-w),
\end{align*}
which is (\ref{Bar-Psi-Rho-Reciprocity}). Using $r(G)+n(G)=|E|$ and
$|B_\varrho|+|C_\varrho|=|E|$, we obtain (\ref{Psi-Rho-Reciprocity})
as well.
\end{proof}

\section{Weighted modular complementary polynomial}

In this section we pass from the counting of integral tension-flows
with weights to the counting of modular tension-flows with weights.
Let $p,q$ be positive integers. For brevity, we write ${\Bbb Z}_p$
for ${\Bbb Z}/p{\Bbb Z}$ and ${\Bbb Z}_q$ for ${\Bbb Z}/q{\Bbb Z}$.
There is a {\em $(p,q)$-modular map}
\begin{equation}
\Mod_{p,q}: {\Bbb R}^E\times{\Bbb R}^E \rightarrow ({\Bbb R}/p{\Bbb
Z})^E\times({\Bbb R}/q{\Bbb Z})^E,
\end{equation}
defined for $(f,g)\in{\Bbb R}^E\times{\Bbb R}^E$ and $e\in E$ by
\begin{equation}
\Mod_{p,q}(f,g)(e)=(f(e)\:\mod p,\;g(e)\:\mod q).
\end{equation}
Let $\varepsilon,\varrho$ be orientations on $G$ and $S$ an edge
subset of $G$. Let
$Q^p_{\varrho,\varepsilon,S}:[0,p]^E\rightarrow[0,p]^E$ be an
involution relative to $S$, defined for $f\in[0,p]^E$ and $e\in E$
by
\begin{equation}
(Q^p_{\varrho,\varepsilon,S} f)(e)=\left\{\begin{array}{rl} p-f(x) &
\mbox{if} \sp e\in S,\;\varrho(e)\neq \varepsilon(e),\\
f(x) & \mbox{otherwise}. \\
\end{array}\right.
\end{equation}
Then the involutions $Q^p_{\varrho,\varepsilon,S}$ and
$Q^q_{\varrho,\varepsilon,S^c}$ give arise to an involution
\begin{equation}
Q^{p,q}_{\varrho,\varepsilon,S}: [0,p]^E\times[0,q]^E\rightarrow
[0,p]^E\times[0,q]^E,
\end{equation}
given by
$Q^{p,q}_{\varrho,\varepsilon,S}(f,g)=(Q^{p}_{\varrho,\varepsilon,S}f,
Q^{q}_{\varrho,\varepsilon,S^c}g)$, where
$(f,g)\in[0,p]^E\times[0,q]^E$.

Recall the equivalence relation on the set ${\mathcal O}(G)$ of all
orientations on $G$. Two orientations $\varepsilon,\varrho$ on $G$
are said to be {\em cut-Eulerian equivalent}, written
$\varepsilon\sim_\textsc{ce}\varrho$, if the subgraph induced by the
edge set
\[
E(\varepsilon\neq\varrho):=\{e\in
E(G)\:|\:\varepsilon(e)\neq\varrho(e)\},
\]
is a disjoint union of directed circuits and directed bonds, with
the orientation either $\varepsilon$ or $\varrho$. Indeed,
$\sim_\textsc{ce}$ is an equivalence relation on $\mathcal{O}(G)$;
see Lemma~4.2(a) of \cite{Chen-Dual}. We denote by
$[\mathcal{O}(G)]$ the set of all cut-Eulerian equivalence classes
of $\mathcal{O}(G)$.

It is shown that the restriction
$\Mod_{p,q}:(p,q)\Delta_\textsc{ctf}(G,\varepsilon)\rightarrow
K(G,\varepsilon;{\Bbb Z}_p, {\Bbb Z}_q)$ is surjective; see
Lemma~4.4 of \cite{Chen-Dual}. For each
$(f,g)\in(p,q)\Delta^\varrho_\textsc{ctf}(G,\varepsilon)$, the
inverse image
$(p,q)\Delta_\textsc{ctf}(G,\varepsilon)\cap\Mod_{p,q}^{-1}\Mod_{p,q}(f,g)$
consists of the elements of the form
$P_{\varepsilon,\alpha}Q^{p,q}_{\alpha,\varrho,S}P_{\varrho,\varepsilon}(f,g)$,
where $\alpha\in[\varrho]$; see Lemma~4.5 of \cite{Chen-Dual}.
Moreover, for each orientation $\varrho$ on $G$, $\#[\varrho]$
equals the number of 0-1 complementary tension-flows of
$(G,\varrho)$; see Lemma~4.6 of \cite{Chen-Dual}. Thus, writing in
formulas, we have
\begin{align}
\#[\varrho]&=\bar\kappa_\varrho(G;1,1)
=|({\Bbb Z}^2)^E\cap \bar\Delta^+_\textsc{ctf}(G,\varrho)|\\
&=|(p,q)\Delta_\textsc{ctf}(G,\varepsilon)\cap\Mod_{p,q}^{-1}\Mod_{p,q}(f,g)|,
\end{align}
where $(f,g)\in(p,q)\Delta^\varrho_\textsc{ctf}(G,\varepsilon)$.

Let $A,B$ be finite abelian groups of orders $|A|=p,|B|=q$. For
arbitary integers $r,s$, we introduce a weighted counting function
\begin{equation}\label{Gamma-PQRS}
\psi(G;p,q,r,s): =\sum_{(f,g)\in K_{\rm nz}(G,\varepsilon;A,B)}
r^{|\supp f|} s^{|\supp g|},
\end{equation}
which turns out to be a polynomial function of positive integers
$p,q$ and integers $r,s$, called the {\em weighted complementary
polynomial} of $G$. To interpret the values of the polynomial
$\psi(G;x,y,z,t)$ when $x,y$ are negative integers, we introduce the
following polynomial counting function
\begin{equation}
\bar\psi(G;p,q,r,s):=\sum_{[\varrho]\in[{\mathcal O}(G)]}
r^{|B_\varrho|} s^{|C_\varrho|}
|(p,q)\bar\Delta^+_\textsc{ctf}(G,\varrho)\cap({\Bbb Z}^2)^E|
\end{equation}
of non-negative integers $p,q$ and arbitrary integers $r,s$, called
the {\em dual weighted complementary polynomial} of $G$. When
$r,s=1$, $\psi(G;p,q,r,s)$ reduces to $\kappa(G;p,q)$ in
\cite{Chen-Dual} and $F_G(p,q)$ in \cite{Kochol3}, and
$\bar\psi(G;p,q,r,s)$ reduces to $\bar\kappa(G;p,q)$ in
\cite{Chen-Dual}.

\begin{thm}
The counting function $\psi(G;p,q,r,s)$ $(\bar\psi(G;p,q,r,s))$ is a
polynomial function of positive (non-negative) integers $p,q$ and
integers $r,s$, and can be written as
\begin{align}
\psi(G;x,y,z,w) &=\sum_{\varrho\in[{\mathcal O}(G)]}
z^{|B_\varrho|} w^{|C_\varrho|} \kappa_\varrho(G;x,y), \label{mod-psi-xyzw} \\
\bar\psi(G;x,y,z,w) &=\sum_{\varrho\in[{\mathcal O}(G)]}
z^{|B_\varrho|} w^{|C_\varrho|} \bar\kappa_\varrho(G;x,y).
\label{mod-bar-psi-xyzw}
\end{align}
Moreover, $\psi$ and $\bar\psi$ satisfy the {\em Reciprocity Law:}
\begin{align}
\psi(G;-x,-y,z,w) &=(-1)^{n(G)} \bar\psi(G;x,y,-z,w), \\
 &=(-1)^{r(G)} \bar\psi(G;x,y,z,-w).
\end{align}
In particular, $\psi(G;x,y,0,0)=\bar\psi(G;x,y,0,0)=0$, and
\begin{align}
\psi(G;x,y,1,0)&=\tau(G,x), &  \bar\psi(G;x,y,1,0)&=\bar\tau(G,x),\\
\psi(G;x,y,0,1)&=\varphi(G,y), &  \bar\psi(G;x,y,0,1)&=\bar\varphi(G,y),\\
\psi(G;x,y,1,1)&=\kappa(G;x,y), &
\bar\psi(G;x,y,1,1)&=\bar\kappa(G;x,y).
\end{align}
\end{thm}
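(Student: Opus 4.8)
The plan is to mirror the integral case of the theorem in Section~4, replacing the count of lattice points in $(p,q)\Delta_\textsc{ctf}(G,\varepsilon)$ by the modular count $|K(G,\varepsilon;{\Bbb Z}_p,{\Bbb Z}_q)|$ and using the $(p,q)$-modular map to transport the orientation decomposition; after reducing to $A={\Bbb Z}_p$, $B={\Bbb Z}_q$ (the count depending only on the orders $p,q$), the first task is to establish the expansion (\ref{mod-psi-xyzw}). First I would partition $K(G,\varepsilon;{\Bbb Z}_p,{\Bbb Z}_q)$ according to the cut-Eulerian class $[\varrho]$ from which each modular tension-flow arises. Because cut-Eulerian equivalence only reorients disjoint unions of directed circuits and directed bonds, the bond/circuit decomposition is invariant along a class, so $B_\alpha=B_\varrho$ and $C_\alpha=C_\varrho$ for every $\alpha\in[\varrho]$; consequently each modular tension-flow of class $[\varrho]$ has support pattern $(\supp\bar f,\supp\bar g)=(B_\varrho,C_\varrho)$ and hence carries the constant weight $r^{|B_\varrho|}s^{|C_\varrho|}$, which is exactly what allows the weight to be pulled out of the inner sum.

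Next I would count the modular tension-flows in each class. By Lemma~4.4 of \cite{Chen-Dual} the modular map
\[
\Mod_{p,q}:(p,q)\Delta_\textsc{ctf}(G,\varepsilon)\cap({\Bbb Z}^2)^E\longrightarrow K(G,\varepsilon;{\Bbb Z}_p,{\Bbb Z}_q)
\]
is surjective, and by Lemmas~4.5 and~4.6 the fiber over a tension-flow arising from $\varrho$ is $\{P_{\varepsilon,\alpha}Q^{p,q}_{\alpha,\varrho,S}P_{\varrho,\varepsilon}(f,g):\alpha\in[\varrho]\}$, a set of cardinality $\#[\varrho]$ having exactly one representative in each piece $\Delta^\alpha_\textsc{ctf}(G,\varepsilon)$. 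Restricting $\Mod_{p,q}$ to the single piece $\Delta^\varrho_\textsc{ctf}(G,\varepsilon)\cap({\Bbb Z}^2)^E$ is then a bijection onto the modular tension-flows of class $[\varrho]$, so their number equals $|\Delta^\varrho_\textsc{ctf}(G,\varepsilon)\cap({\Bbb Z}^2)^E|=\kappa_\varrho(G;p,q)$. Summing the constant class weight gives
\[
\psi(G;p,q,r,s)=\sum_{[\varrho]\in[{\mathcal O}(G)]} r^{|B_\varrho|}s^{|C_\varrho|}\,\kappa_\varrho(G;p,q),
\]
which is (\ref{mod-psi-xyzw}); polynomiality in $p,q$ and independence of $\varepsilon$ are inherited from the Ehrhart polynomials $\kappa_\varrho$ through the product formula (\ref{Kappa-Phi-Tau}), while the dependence on $r,s$ is plainly monomial. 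Formula (\ref{mod-bar-psi-xyzw}) is just the definition of $\bar\psi$ rewritten through (\ref{Bar-Kappa-Phi-Tau}).

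With both expansions available, the Reciprocity Law becomes a termwise application of Lemma~\ref{Psi-Rho}: writing $\psi=\sum_{[\varrho]}\psi_\varrho$ and $\bar\psi=\sum_{[\varrho]}\bar\psi_\varrho$ over class representatives, the identities (\ref{Psi-Rho-Reciprocity}) and (\ref{Bar-Psi-Rho-Reciprocity}) yield the two displayed reciprocities once the common signs $(-1)^{n(G)}$ and $(-1)^{r(G)}$ are factored out. The special values then follow by substituting $z,w\in\{0,1\}$ into (\ref{mod-psi-xyzw}) and (\ref{mod-bar-psi-xyzw}). The monomial $z^{|B_\varrho|}w^{|C_\varrho|}$ annihilates all terms when $z=w=0$ (since $B_\varrho\cup C_\varrho=E$); it selects the acyclic classes, where $C_\varrho=\emptyset$ forces $\kappa_\varrho=\tau_\varrho(G,E;x)$ because $\varphi_\varrho(G,\emptyset;y)=1$, giving $\tau(G,x)$ when $z=1,w=0$; it selects the totally cyclic classes, where $B_\varrho=\emptyset$ forces $\kappa_\varrho=\varphi_\varrho(G,E;y)$ because $\tau_\varrho(G,\emptyset;x)=1$, giving $\varphi(G,y)$ when $z=0,w=1$; and it recovers $\kappa(G;x,y)=\sum_{[\varrho]}\kappa_\varrho$ when $z=w=1$. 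I expect the main obstacle to be the bijection step—verifying through the cited fiber lemmas that $\Mod_{p,q}$ is injective on each single piece $\Delta^\varrho_\textsc{ctf}(G,\varepsilon)$ and meets every fiber in exactly one point there—since this is where the geometry of the involutions $P_{\varrho,\varepsilon}$ and $Q^{p,q}_{\alpha,\varrho,S}$ does the real work; once that is in place the remaining bookkeeping is routine.
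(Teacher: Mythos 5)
Your proposal is correct and takes essentially the same route as the paper: partition $K(G,\varepsilon;{\Bbb Z}_p,{\Bbb Z}_q)$ by cut-Eulerian classes through the fibers of $\Mod_{p,q}$ (Lemmas 4.4--4.7 of \cite{Chen-Dual}), identify the per-class count with $\kappa_\varrho(G;p,q)$, pull out the constant weight $r^{|B_\varrho|}s^{|C_\varrho|}$ via support preservation, and obtain the reciprocity law termwise from Lemma~\ref{Psi-Rho}. The only cosmetic difference is in the counting step: you realize the class count by restricting $\Mod_{p,q}$ to a single piece $(p,q)\Delta^\varrho_\textsc{ctf}(G,\varepsilon)$ and checking it is a bijection onto the class image, whereas the paper double-counts the lattice points of $(p,q)\Delta_{[\varrho]}$ (total $=$ image size $\times$ fiber size $\bar\kappa_\varrho(G;1,1)$, and also total $=\kappa_\varrho(G;p,q)\cdot\bar\kappa_\varrho(G;1,1)$), which sidesteps the injectivity-on-one-piece verification you flag as the main obstacle.
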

\begin{proof}
Fix an orientation $\varrho$ on $G$; let
$\Delta_{[\varrho]}:=\bigsqcup_{\rho\in[\varrho]}
(p,q)\Delta^\rho_\textsc{ctf}(G,\varepsilon)$. By Lemma~4.7 of
\cite{Chen-Dual}, we have
\[
(p,q)\Delta_{[\varrho]} =
(p,q)\Delta_\textsc{ctf}(G,\varepsilon)\cap \Mod_{p,q}^{-1}
\Mod_{p,q}(p,q)\Delta^\varrho_\textsc{ctf}(G,\varepsilon).
\]
Since the orientation $\varrho$ can be replaced by any orientation
$\rho$ that is cut-Eulerian equivalent to $\varrho$, we further have
\begin{equation}\label{Tilde-D}
(p,q)\Delta_{[\varrho]} =(p,q)\Delta_\textsc{ctf}(G,\varepsilon)\cap
\Mod_{p,q}^{-1} \Mod_{p,q}(p,q)\Delta_{[\varrho]}.
\end{equation}
On the one hand, counting the lattice points along the inverse fiber
of $\Mod_{p,q}$, we obtain the product
\[
|({\Bbb Z}^2)^E\cap (p,q)\Delta_{[\varrho]}| = |\Mod_{p,q}({\Bbb
Z}^2)^E\cap (p,q)\Delta_{[\varrho]}|\cdot\bar\kappa_\varrho(G;1,1).
\]
On the other hand, recall that
$\kappa_\rho(G;p,q)=\kappa_\varrho(G;p,q)$ if
$\rho\sim_\textsc{ce}\varrho$; then by definition of
${\Delta}_{[\varrho]}$, we obtain another product
\[
|({\Bbb Z}^2)^E\cap (p,q)\Delta_{[\varrho]}| =
\kappa_\varrho(G;p,q)\cdot\bar\kappa_\varrho(G;1,1).
\]
It then follows that
\[
\kappa_\varrho(G;p,q) =|\Mod_{p,q}({\Bbb Z}^2)^E\cap
(p,q)\Delta_{[\varrho]}|.
\]
Now recall the decomposition
\[
K(G,\varepsilon;{\Bbb Z}_p,{\Bbb Z}_q) =
\bigsqcup_{[\varrho]\in[{\mathcal O}(G)]}
\Mod_{p,q}(p,q)\Delta_{[\varrho]}\cap ({\Bbb Z}^2)^E.
\]
For each $(f,g)\in (p,q)\Delta^\rho_\textsc{ctf}(G,\varepsilon)$,
let $(\tilde f,\tilde g)=\Mod_{p,q}(f,g)$. Then
\[
\supp\tilde f=\supp f=B_\varrho, \sp\supp\tilde g=\supp g=C_\varrho.
\]
Thus
\[
\psi(G;p,q,r,s)=\sum_{\varrho\in[\mathcal{O}(G)]} r^{|B_\varrho|}
s^{|C_\varrho|} \kappa_\varrho(G;p,q),
\]
The reciprocity law follows from the reciprocity law of
$\psi_\varrho$ and $\bar\psi_\varrho$; see Lemma~\ref{Psi-Rho}.
\end{proof}

We end up this section by stating an interpretation for all values
of the Tutte polynomial. Part (i) is already given in
\cite{Chen-Dual}.

\begin{cor}
Let $\SDR[\mathcal{O}(G)]$ be a set of distinct representatives of
cut-Eulerian equivalence classes of $[\mathcal{O}(G)]$. Let $p,q$ be
positive integers. Then

{\rm (i)} $T(G;p,q)$ counts the number of triples $(\varrho,f,g)$,
where $\varrho\in\SDR[\mathcal{O}(G)]$, $(f,g)$ is an integer-valued
tension-flow of $(G,\varrho)$ such that
\[
0\leq f<p,\; 0\leq g<q.
\]

{\rm (ii)} $T(G;-p,q)$ counts the number of signed triples
$(-1)^{r(G/C_\varrho)}(\varrho,f,g)$, where
$\varrho\in\SDR[\mathcal{O}(G)]$, $(f,g)$ is an integer-valued
tension-flow of $(G,\varrho)$ such that
\[
0<f|_{B_\varrho}\leq p,\; f|_{C_\varrho}=0,\; 0\leq g<q.
\]

{\rm (iii)} $T(G;p,-q)$ counts the number of signed triples
$(-1)^{n\langle C_\varrho\rangle}(\varrho,f,g)$, where
$\varrho\in\SDR[\mathcal{O}(G)]$, $(f,g)$ is an integer-valued
tension-flow of $(G,\varrho)$ such that
\[
0\leq f<p,\; g|_{B_\varrho}=0,\; 0<g|_{C_\varrho}\leq q.
\]

{\rm (iv)} $T(G;-p,-q)$ counts the number of signed triples
$(-1)^{r(G)+|C_\varrho|}(\varrho,f,g)$, where
$\varrho\in\SDR[\mathcal{O}(G)]$, $(f,g)$ is an integer-valued
tension-flow of $(G,\varrho)$ such that
\[
0<f|_{B_\varrho}\leq p,\; f|_{C_\varrho}=0,\; g|_{B_\varrho}=0,\;
0<g|_{C_\varrho}\leq q.
\]
\end{cor}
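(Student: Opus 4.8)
The plan is to obtain all four evaluations from the single polynomial identity
\[
T(G;x,y)=\sum_{\varrho\in\SDR[\mathcal{O}(G)]}
\bar\tau_\varrho(G,B_\varrho;x-1)\,\bar\varphi_\varrho(G,C_\varrho;y-1),
\]
and then to flip the sign of $x$, of $y$, or of both by Ehrhart--Macdonald reciprocity applied separately to the two factors. Part (i), already proved in \cite{Chen-Dual}, gives the left side as a lattice-point count; using the product decomposition (\ref{Delta-Product}) and the Ehrhart descriptions (\ref{Bar-Kappa-Phi-Tau}), this count is the value at the positive integers $(p,q)$ of the right-hand side---the reorganization of the unrestricted count of (i) into this complementary, factored form being the one nontrivial point, taken up below. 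Granting it, the two polynomials agree at all positive integers and hence coincide identically, which lets me substitute negative integers for $x$ and $y$ freely.

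The next step is to read off the dimensions of the factor polytopes and apply reciprocity. By the polyhedral isomorphisms $\Delta^+_\textsc{tn}(G,B_\varrho)\simeq\Delta^+_\textsc{tn}(G/C_\varrho,\varrho)$ and $\Delta^+_\textsc{fl}(G,C_\varrho)\simeq\Delta^+_\textsc{fl}(G\backslash B_\varrho,\varrho)$ established above, the tension factor is full-dimensional of dimension $r(G/C_\varrho)$ and the flow factor has dimension $n\langle C_\varrho\rangle$. Ehrhart--Macdonald reciprocity then yields $\bar\tau_\varrho(G,B_\varrho;-t)=(-1)^{r(G/C_\varrho)}\tau_\varrho(G,B_\varrho;t)$ and $\bar\varphi_\varrho(G,C_\varrho;-t)=(-1)^{n\langle C_\varrho\rangle}\varphi_\varrho(G,C_\varrho;t)$, where $\tau_\varrho$ and $\varphi_\varrho$ count the relatively interior lattice points, namely the tensions with $0<f|_{B_\varrho}\le p,\ f|_{C_\varrho}=0$ and the flows with $0<g|_{C_\varrho}\le q,\ g|_{B_\varrho}=0$. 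Substituting $x=-p$ alone produces (ii) with the weight $(-1)^{r(G/C_\varrho)}$; substituting $y=-q$ alone produces (iii) with the weight $(-1)^{n\langle C_\varrho\rangle}$; substituting both produces (iv), where the combined sign $(-1)^{r(G/C_\varrho)+n\langle C_\varrho\rangle}$ reduces, via $r(G)=r(G/C_\varrho)+r\langle C_\varrho\rangle$ and $n\langle C_\varrho\rangle=|C_\varrho|-r\langle C_\varrho\rangle$, to the $(-1)^{r(G)+|C_\varrho|}$ of the corollary. This last identity is a useful internal check: it is precisely the reciprocity between $\kappa_\varrho$ and $\bar\kappa_\varrho$ (Proposition~3.3(c) of \cite{Chen-Dual}), which underlies the reciprocity of Lemma~\ref{Psi-Rho}, so the two-sided case is forced to agree with the two one-sided cases.

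The step I expect to be the real obstacle is the bookkeeping of supports and boundaries rather than the reciprocity itself. The count in Part (i) is phrased through \emph{unrestricted} tension--flows in the box $0\le f<p,\ 0\le g<q$, whereas the factored form $\sum_\varrho\bar\tau_\varrho\,\bar\varphi_\varrho$ is a \emph{complementary}, support-restricted count; these two decompositions of $T(G;p,q)$ do not agree term by term (their tension factors already have different degrees, $r(G)$ versus $r(G/C_\varrho)$), so the passage between them is not a formal manipulation but must be carried out across the whole cut-Eulerian sum. Here I would lean on the orientation-reversal involutions $P_{\varrho,\varepsilon}$ and $Q^{p,q}_{\varrho,\varepsilon,S}$ and the fiberwise counting of $\Mod_{p,q}$ from \cite{Chen-Dual}, which are exactly the tools that redistribute the unrestricted counts over the classes in $\SDR[\mathcal{O}(G)]$ into the restricted ones; once this redistribution and the matching of the half-open inequalities $0<\,\cdot\,\le p$ against the interiors are settled, reading off the signed interior counts gives (ii)--(iv) directly.
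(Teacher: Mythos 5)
Your treatment of (ii)--(iv) given the expansion
$T(G;x,y)=\sum_{\varrho\in\SDR[\mathcal{O}(G)]}\bar\tau_\varrho(G,B_\varrho;x-1)\,\bar\varphi_\varrho(G,C_\varrho;y-1)$
is exactly the paper's argument and is carried out correctly: Ehrhart--Macdonald reciprocity applied to each factor, with dimensions $r(G/C_\varrho)$ and $n\langle C_\varrho\rangle$ read off from the isomorphisms of Proposition 4.2(c), and the sign reduction $r(G/C_\varrho)+n\langle C_\varrho\rangle\equiv r(G)+|C_\varrho| \pmod 2$. The genuine gap is in your first step, the derivation of the expansion itself, which you explicitly leave unexecuted (``once this redistribution \ldots is settled''). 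Since that identity is the sole substantive input to (ii)--(iv), the proof as written has a hole at its foundation.

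Moreover, your diagnosis of why that step is hard is incorrect. You claim the two decompositions of $T(G;p,q)$ ``do not agree term by term'' because the tension factor in part (i) should have degree $r(G)$ rather than $r(G/C_\varrho)$. In fact they agree term by term, for an elementary reason: if $f$ is an integer tension of $(G,\varrho)$ with $f\geq 0$, then summing the tension condition around any directed circuit of $(G,\varrho)$ (all couplings equal $+1$) forces $f=0$ on that circuit, hence $f|_{C_\varrho}=0$; dually, if $g$ is a flow with $g\geq 0$, summing the conservation law over one shore of any directed bond forces $g|_{B_\varrho}=0$. Therefore the set counted in (i) for a fixed $\varrho$ is precisely $\{(f,g)\:|\: f|_{C_\varrho}=0,\ 0\leq f|_{B_\varrho}\leq p-1,\ g|_{B_\varrho}=0,\ 0\leq g|_{C_\varrho}\leq q-1\}$, which is the set of lattice points of $(p-1,q-1)\bar\Delta^+_\textsc{ctf}(G,\varrho)$, of cardinality $\bar\kappa_\varrho(G;p-1,q-1)=\bar\tau_\varrho(G,B_\varrho;p-1)\,\bar\varphi_\varrho(G,C_\varrho;q-1)$ by (\ref{Bar-Kappa-Phi-Tau}); in particular the half-open box count per orientation has degree $r(G/C_\varrho)$ in $p$, not $r(G)$. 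So no redistribution across the cut-Eulerian classes, and none of the $P_{\varrho,\varepsilon}$, $Q^{p,q}_{\varrho,\varepsilon,S}$, $\Mod_{p,q}$ machinery, is needed. (The paper in fact bypasses part (i) entirely here: it obtains the expansion from $T(G;x,y)=\bar\kappa(G;x-1,y-1)$ together with the decomposition (\ref{mod-bar-psi-xyzw}) at $z=w=1$, both already available.) With this one identification supplied, your reciprocity and sign computations complete the proof.
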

\begin{proof}
For the proof of part (i), see Theorem~1.3 and Corollary~1.4 in
\cite{Chen-Dual}. Since $T(G;x,y)=\bar\kappa(G;x-1,y-1)$ and the
decomposition formula (\ref{mod-bar-psi-xyzw}), then for the Tutte
polynomial $T$ we have the expansion
\[
T(G;x,y)=\sum_{\varrho\in\SDR[{\mathcal{O}(G)}]}
\bar\tau_\varrho(G/C_\varrho,x-1)\,\bar\varphi_\varrho(\langle
C_\varrho\rangle,y-1).
\]
Recall the the reciprocity laws for $\tau_\varrho$ and
$\varphi_\varrho$; we have
\[
\tau_\varrho(G;-p,q) =\sum_{\varrho\in\SDR[{\mathcal{O}(G)}]}
(-1)^{r(G/C_\varrho)} \tau_\varrho(G,p+1) \,
\bar\varphi_\varrho(G\backslash B_\varrho,q-1),
\]
\[
\tau_\varrho(G;p,-q) =\sum_{\varrho\in\SDR[{\mathcal{O}(G)}]}
(-1)^{n\langle C_\varrho\rangle} \bar\tau_\varrho(G,p) \,
\varphi_\varrho(G\backslash B_\varrho,-q-1),
\]
\[
\tau_\varrho(G;-p,-q) =\sum_{\varrho\in\SDR[{\mathcal{O}(G)}]}
(-1)^{r(G/C_\varrho)+n\langle C_\varrho\rangle} \tau_\varrho(G,p+1)
\, \varphi_\varrho(G\backslash B_\varrho,q+1).
\]
Since $r(G/C_\varrho)=r(G)-r\langle C_\varrho\rangle$ and $r\langle
C_\varrho\rangle+n\langle C_\varrho\rangle=|C_\varrho|$, it follows
that $r(G/C_\varrho)+n\langle C_\varrho\rangle=r(G)+|C_\varrho|$. By
definition of $\tau_\varrho,\bar\tau_\varrho,
\varphi_\varrho,\bar\varphi_\varrho$ (see (\ref{Kappa-Phi-Tau}) and
(\ref{Bar-Kappa-Phi-Tau})), parts (ii), (iii), and (iv) are
equivalent to the above three expansion formulas.
\end{proof}

{\bf Remark} Notice that $T(G;-p,-q)=R(G;-(p+1),-(q+1))$ for
positive integers $p,q$. The expansion formula (\ref{Rpq-2}) gives a
different interpretation for $T(G;-p,-q)$, counting the number of
signed tension-flows $(f,g)$ of $(G,\varepsilon)$ with values in
${\Bbb Z}_{p+1}$ and ${\Bbb Z}_{q+1}\times{\Bbb Z}_{q+1}$ and with
the sign $(-1)^{|\supp g|}$.

\section{Other weighted Tutte type polynomials}

Recall Whitney's {\em rank generating polynomial} for a graph
$G=(V,E)$ is defined by
\begin{equation}
R(G;x,y)=\sum_{X\subseteq E} x^{r(G)-r\langle X\rangle} y^{n\langle
X\rangle}, \label{Rank-Generating-Function}
\end{equation}
where $\langle X\rangle=(V,X)$ is the spanning subgraph with the
edge set $X$. The {\em Tutte polynomial} of $G$ is defined by the
substitution of variables:
\[
T(G;x,y)=R(G;x-1,y-1).
\]
Let $E_n$ be the graph with $n$ vertices and empty edge set. The
Tutte polynomial can be computed by the recursion:
\[
T\big(E_n;x,y\big)=1,
\]
\[
T(G;x,y)=\left\{\begin{array}{ll}
yT(G\backslash e;x,y) & \mbox{if $e$ is a loop,}\\
xT(G/e;x,y) & \mbox{if $e$ is a cut-edge,} \\
T(G\backslash e;x,y)+T(G/e;x,y) & \mbox{otherwise.}
\end{array}\right.
\]

Let $A,B$ be finite abelian groups of orders $|A|=p, |B|=q$, and
$\Omega:=\Omega(G,\varepsilon;A,B)$. Then $R(G;x,y)$ has the
following interpretations:
\begin{equation}
R(G;p,q) = \sum_{(f,g)\in\Omega\atop \supp g\subseteq \ker f}
2^{|\ker f-\supp g|}, \label{Rpq-1}
\end{equation}
\begin{equation}
R(G;-p,-q) = (-1)^{r(G)} \sum_{(f,g)\in\Omega\atop \supp g = \ker f}
(-1)^{|\supp g|}. \label{Rpq-2}
\end{equation}
Both (\ref{Rpq-1}) and (\ref{Rpq-2}) are due to Reiner
\cite{Reiner1}, obtained from a convolution formula for the Tutte
polynomial \cite{Kook-Reiner-Stanton}. A simple and direct proof for
(\ref{Rpq-1}) and (\ref{Rpq-2}) is provided by the present author;
see Proposition~5.1 of \cite{Chen-Dual}.

Searching on-line recently (when revising the paper), we found a
combinatorial interpretation for the Tutte polynomial by Breuer and
Sanyal \cite{BS-1} at positive integers $p,q$: {\em $T(G;p,q)$ is
the number of triplets $(f,g,\varrho)$, where $(f,g)$ is a ${\Bbb
Z}_p\times{\Bbb Z}_q$-tension-flow of $(G,\varepsilon)$ such that
$\supp g\subseteq\ker f$, and $\varrho$ is a geometric reorientation
on the edge subset $\ker f-\supp g$.} \noindent Here ``geometric"
means that each loop is allowed to have exactly two orientations.
Note that we assume that each loop has only one orientation
``combinatorially." This interpretation is naively a reformulation
of (\ref{Rpq-1}), for each edge of $\ker f-\supp g$ has exactly two
choices to be reoriented.

Let $X,Y\subseteq E$ be edge subsets. Recall $\Omega_{X,Y}=T_X\times
F_Y$. We define the subset
\[
\Omega^0_{X,Y}:= \{(f,g)\in\Omega(G,\varepsilon;A,B)\:|\: \ker f=X,
\ker g=Y\}.
\]
It is clear that $\Omega_{X,Y}$ is the disjoint union
$\bigsqcup_{X\subseteq Z,\, Y\subseteq W} \Omega^0_{Z,W}$ so that
\[
1_{\Omega_{X,Y}}= \sum_{X\subseteq Z,\, Y\subseteq W}
1_{\Omega^0_{Z,W}}.
\]
By the M\"{o}bius inversion, we have
\[
\mbox{\large 1}_{\Omega^0_{X,Y}}= \sum_{X\subseteq Z,\, Y\subseteq
W} (-1)^{|Z-X|+|W-Y|} \, \mbox{\large 1}_{\Omega_{Z,W}}.
\]
In particular, if $|A|=p$, $|B|=q$, we then further have
\[
|\Omega^0_{X,Y}| = \sum_{X\subseteq Z,\, Y\subseteq W}
(-1)^{|Z-X|+|W-Y|} p^{r\langle E\rangle -r\langle Z\rangle}
q^{r\langle W^c\rangle}.
\]

\begin{thm}\label{Elliptic-Integral}
Let $\nu$ be a valuation on the Boolean algebra of the tension-flow
space $\Omega(G,\varepsilon;A,B)$. Then
\begin{eqnarray}\label{UV-Integral1}
\lefteqn{\iint\limits_{\supp f\subseteq\ker g} u^{|\ker f|}v^{|\supp
g|}\, {\rm
d}\nu(f,g) =} \nonumber\\
&& \sum_{Y\subseteq X\subseteq E} (uv)^{|Y|}\, (u-uv-1)^{|X-Y|} \,
\nu(\Omega_{X,Y^c}).
\end{eqnarray}
\end{thm}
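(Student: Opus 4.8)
The plan is to expand the integrand into a finite linear combination of indicator functions of the product subgroups $\Omega_{Z,W}$ and then apply $\nu$ term by term, invoking Groemer's extension theorem \cite{Groemer1} to make sense of integrating a step function against the valuation. First I would observe that the integrand $u^{|\ker f|}v^{|\supp g|}$ is constant on each stratum $\Omega^0_{X,Y}=\{(f,g):\ker f=X,\ker g=Y\}$, where it takes the value $u^{|X|}v^{|Y^c|}$ (since $\supp g=Y^c$ there). On $\Omega^0_{X,Y}$ the constraint $\supp f\subseteq\ker g$ reads $X^c\subseteq Y$, i.e.\ $X\cup Y=E$. Hence the integral equals
\[
\sum_{X\cup Y=E} u^{|X|}\,v^{|Y^c|}\,\nu\bigl(\Omega^0_{X,Y}\bigr).
\]

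Next I would insert the M\"obius inversion established just before the statement,
\[
1_{\Omega^0_{X,Y}}=\sum_{X\subseteq Z,\,Y\subseteq W}(-1)^{|Z-X|+|W-Y|}\,1_{\Omega_{Z,W}},
\]
apply $\nu$, and interchange the order of summation, so that the integral becomes $\sum_{Z,W}c(Z,W)\,\nu(\Omega_{Z,W})$ with coefficient
\[
c(Z,W)=\sum_{\substack{X\subseteq Z,\;Y\subseteq W\\ X\cup Y=E}}(-1)^{|Z-X|+|W-Y|}\,u^{|X|}\,v^{|Y^c|}.
\]

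The crux of the argument—and the step I expect to require the most care—is the evaluation of $c(Z,W)$, which I would carry out by factoring the sum edge by edge. Choosing $(X,Y)$ with $X\subseteq Z$, $Y\subseteq W$, and $X\cup Y=E$ amounts to deciding independently, for each edge $e$, whether $e\in X$ and whether $e\in Y$, and the attached weight is multiplicative over $E$. For an edge in $Z\cap W$ the three admissible local choices (both, $X$ only, $Y$ only) contribute $u-uv-1$; for an edge in $Z\setminus W$ the unique admissible choice contributes $uv$; for an edge in $W\setminus Z$ the unique admissible choice contributes $1$; and an edge outside $Z\cup W$ admits no choice compatible with $X\cup Y=E$, forcing $c(Z,W)=0$ unless $Z\cup W=E$. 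Thus, whenever $Z\cup W=E$,
\[
c(Z,W)=(u-uv-1)^{|Z\cap W|}\,(uv)^{|Z\setminus W|}=(uv)^{|W^c|}\,(u-uv-1)^{|Z\cap W|},
\]
using $Z\setminus W=W^c$.

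Finally I would re-index by $X:=Z$ and $Y:=W^c$, under which the condition $Z\cup W=E$ becomes $Y\subseteq X$, while $Z\cap W=X-Y$, $|W^c|=|Y|$, and $\Omega_{Z,W}=\Omega_{X,Y^c}$; this yields exactly (\ref{UV-Integral1}). The only genuine subtlety is the bookkeeping in the per-edge case analysis; the remaining ingredients are the routine M\"obius inversion already recorded and Groemer's extension of $\nu$ to step functions.
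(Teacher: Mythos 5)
Your proposal is correct and follows essentially the same route as the paper's proof: stratify $\Omega$ by the pair $(\ker f,\ker g)$, insert the recorded M\"obius inversion $1_{\Omega^0_{X,Y}}=\sum_{X\subseteq Z,\,Y\subseteq W}(-1)^{|Z-X|+|W-Y|}\,1_{\Omega_{Z,W}}$, exchange the order of summation, and evaluate the resulting coefficient of $\nu(\Omega_{Z,W})$. The only difference is in that last evaluation, and it is cosmetic: where you compute $c(Z,W)$ by factoring the sum edge by edge (three admissible choices on $Z\cap W$ giving $u-uv-1$, forced choices on the symmetric difference, and vanishing unless $Z\cup W=E$), the paper evaluates the equivalent inner sum $\sum_{W\subseteq Y\subseteq X\subseteq Z}(-u)^{|X|}(-v)^{|Y|}$ by two applications of the binomial theorem; both yield the same coefficient, hence the same identity.
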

\begin{proof}
The left-hand side of (\ref{UV-Integral1}) can be written as
\begin{eqnarray*}
{\rm LHS} &=& \sum_{Y\subseteq
X\subseteq E} u^{|X|}v^{|Y|}\, \nu(\Omega^0_{X,Y^c}) \\
&=& \sum_{Y\subseteq X\subseteq E}  u^{|X|}v^{|Y|} \sum_{X\subseteq
Z,\, W\subseteq Y}
(-1)^{|Z-X|+|Y-W|} \, \nu(\Omega_{Z,W^c})\\
&=& \sum_{W\subseteq Z\subseteq E} (-1)^{|Z|+|W|}\,
\nu(\Omega_{Z,W^c}) \sum_{W\subseteq Y\subseteq X\subseteq Z}
(-u)^{|X|}(-v)^{|Y|}.
\end{eqnarray*}
For the fixed edge subsets $Z,W\subseteq E$, applying the binomial
theorem, we see that
\begin{align*}
\sum_{W\subseteq Y\subseteq X\subseteq Z} (-u)^{|X|}(-v)^{|Y|} &=
\sum_{W\subseteq X\subseteq Z} (-u)^{|X|}(-v)^{|W|}
\sum_{W\subseteq Y\subseteq X} (-v)^{|Y-W|}\\
&= \sum_{W\subseteq X\subseteq Z} (-u)^{|X|}(-v)^{|W|} (1-v)^{|X-W|}\\
&= (uv)^{|W|}\sum_{W\subseteq X\subseteq Z} (uv-u)^{|X-W|}\\
&= (uv)^{|W|}(uv-u+1)^{|Z-W|}.
\end{align*}
Put the identity into the LHS; we obtain (\ref{UV-Integral1}).
\end{proof}

\begin{cor}
Let $A,B$ be finitely generated abelian groups or infinite fields.
Let $\lambda$ be the product valuation of the unique valuations
\[
\lambda_1:\mathscr{B}(T(G,\varepsilon;A))\to{\Bbb Q}[x],\sp
\lambda_2:\mathscr{B}(F(G,\varepsilon;B))\to{\Bbb Q}[y].
\]
Then $\lambda(\Omega_{X,Y^c}) =x^{r\langle E\rangle -r\langle
X\rangle} y^{n\langle Y\rangle}$. Moreover, if $u=2$ and $v=1/2$,
then the left-hand side of $(\ref{UV-Integral1})$ reduces to the
rank generating polynomial
\[
R(G;x,y) = \iint\limits_{\supp g\subseteq\ker f} 2^{|\ker f-\supp
g|}\, {\rm d}\lambda(f,g).
\]
\end{cor}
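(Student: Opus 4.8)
The plan is to prove the two assertions separately. For the closed form of $\lambda$ on the building-block subgroups $\Omega_{X,Y^c}=T_X\times F_{Y^c}$, I would exploit that $\lambda=\lambda_1\times\lambda_2$ is by construction a product valuation, so that $\lambda(\Omega_{X,Y^c})=\lambda_1(T_X)\,\lambda_2(F_{Y^c})$. The ranks are read off the counting formula displayed just after (\ref{Omega-XY}): from $|T_X(G,\varepsilon;A)|=p^{\,r\langle E\rangle-r\langle X\rangle}$ and $|F_Y(G,\varepsilon;B)|=q^{\,n\langle Y^c\rangle}$ one obtains $\rk T_X=r\langle E\rangle-r\langle X\rangle$ and, replacing $Y$ by $Y^c$, $\rk F_{Y^c}=n\langle Y\rangle$. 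To finish I must check that the torsion prefactors in (\ref{Regulation-Multi-Valuation}) are trivial. Over an infinite field this is immediate, since the tension and flow groups are vector spaces. For finitely generated $A,B$ the point is that $T_X$ is the kernel of the restriction $T(G,\varepsilon;A)\to A^X$, $f\mapsto f|_X$, whence $T(G,\varepsilon;A)/T_X$ embeds in $A^X$ and is torsion-free; the analogous statement holds for $F_{Y^c}$. Thus both prefactors equal $1$ and $\lambda(\Omega_{X,Y^c})=x^{\,r\langle E\rangle-r\langle X\rangle}\,y^{\,n\langle Y\rangle}$.

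For the ``moreover'' part I would substitute $u=2$, $v=1/2$ directly into the right-hand side of (\ref{UV-Integral1}). Here the two elementary relations $uv=1$ and $u-uv-1=0$ do everything: the factor $(u-uv-1)^{|X-Y|}=0^{|X-Y|}$ vanishes unless $X=Y$ (with the convention $0^0=1$), so the double sum over $Y\subseteq X$ collapses to the diagonal $\sum_{X\subseteq E}(uv)^{|X|}\lambda(\Omega_{X,X^c})=\sum_{X\subseteq E}\lambda(\Omega_{X,X^c})$. Invoking the first assertion with $Y=X$ turns this into $\sum_{X\subseteq E}x^{\,r\langle E\rangle-r\langle X\rangle}y^{\,n\langle X\rangle}$, which is exactly $R(G;x,y)$ by (\ref{Rank-Generating-Function}), since $r\langle E\rangle=r(G)$.

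It then remains to identify the left-hand side. I would note that the domain $\supp f\subseteq\ker g$ of Theorem~\ref{Elliptic-Integral} coincides with the domain $\supp g\subseteq\ker f$ of the corollary, since each condition merely forbids any edge from carrying both $f$ and $g$ nonzero. On this common domain $|\ker f-\supp g|=|\ker f|-|\supp g|$, so the specialized integrand $2^{|\ker f|}(1/2)^{|\supp g|}$ is precisely $2^{|\ker f-\supp g|}$, which completes the identification. The only genuinely delicate point is the $0^0=1$ diagonal collapse, which is the crux of the argument; the rank bookkeeping and the torsion-freeness of the quotients $T_X$, $F_{Y^c}$ are routine.
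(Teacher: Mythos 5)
Your proposal is correct and is essentially the derivation the paper intends (the corollary is stated there without proof): the first claim is the product-valuation formula (\ref{Regulation-Multi-Valuation}) applied to $\Omega_{X,Y^c}=T_X\times F_{Y^c}$ together with $T_X\cong T(G/X,\varepsilon;A)$ and $F_{Y^c}\cong F(\langle Y\rangle,\varepsilon;B)$, and the second is the substitution $u=2$, $v=1/2$ into (\ref{UV-Integral1}), where $uv=1$ and $u-uv-1=0$ collapse the double sum to the diagonal $X=Y$ --- precisely the $0^{|X-Y|}$ trick the paper itself uses to prove the final corollary of Section 6. The only caveat is that your torsion-freeness claim for $T(G,\varepsilon;A)/T_X$ (and with it the displayed formula for $\lambda(\Omega_{X,Y^c})$) is valid only when $A$ and $B$ are torsion-free of rank one, i.e.\ ${\Bbb Z}$ or a field viewed as a vector space over itself: for a general finitely generated group the prefactor in (\ref{Regulation-Multi-Valuation}) equals $|\Tor(A)|^{r\langle E\rangle-r\langle X\rangle}$ and the exponent acquires a factor $\rk(A)$; but this imprecision is inherited from the hypothesis of the corollary itself, not introduced by your argument.
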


\begin{thm}\label{Complementary-Ker-F}
Let $\nu$ be a valuation on the Boolean algebra of the tension-flow
space $\Omega(G,\varepsilon;A,B)$. Then
\begin{equation}\label{u-Integral1}
\iint\limits_{\supp f=\ker g} u^{|\ker f|} \, {\rm d} \nu(f,g)
=\sum_{Y\subseteq X\subseteq E} u^{|Y|}(-u-1)^{|X-Y|} \,
\nu(\Omega_{X,Y^c}).
\end{equation}
In particular, if the abelian groups $A,B$ are finitely generated or
infinite fields, $\nu$ is the product valuation $\lambda$, and
$u=-1$, then the rank generating polynomial $R(G;x,y)$ can be
expressed as
\begin{equation}
R(G;-x,-y) = (-1)^{r(G)} \iint\limits_{\ker f=\supp g} (-1)^{|\ker
f|} \, {\rm d}\lambda(f,g). \label{R-X-Y}
\end{equation}
\end{thm}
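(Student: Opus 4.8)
The plan is to prove the integral identity (\ref{u-Integral1}) by the same mechanism as Theorem~\ref{Elliptic-Integral}, exploiting the fact that the complementary domain is far thinner than the elliptic one. The key observation is that on the set $\{\supp f=\ker g\}$ the two kernels are forced to be complementary: once $\ker f=X$ is fixed, the condition $\supp f=\ker g$ (equivalently $\supp g=\ker f$) forces $\ker g=X^c$. Hence the domain decomposes as the disjoint union of the single-parameter strata $\Omega^0_{X,X^c}$ over $X\subseteq E$, and since the integrand $u^{|\ker f|}$ is the constant $u^{|X|}$ on each stratum, the left-hand side equals $\sum_{X\subseteq E}u^{|X|}\,\nu(\Omega^0_{X,X^c})$. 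As in the elliptic case, that these strata lie in the Boolean algebra and that the valuation extends to their indicator functions is guaranteed by Groemer's extension theorem, so this reduction is legitimate.

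Next I would expand $\nu(\Omega^0_{X,X^c})$ by the M\"obius inversion formula recorded just before Theorem~\ref{Elliptic-Integral}, namely $1_{\Omega^0_{X,Y}}=\sum_{X\subseteq Z,\,Y\subseteq W}(-1)^{|Z-X|+|W-Y|}1_{\Omega_{Z,W}}$ specialized at $Y=X^c$. Substituting this, interchanging the order of summation, and reindexing $W=Y^c$ (so that $X^c\subseteq W$ becomes $Y\subseteq X$ and the summands become $\nu(\Omega_{Z,Y^c})$ with sign $(-1)^{|Z-X|+|X-Y|}$), the left-hand side becomes $\sum_{Y\subseteq Z}\nu(\Omega_{Z,Y^c})\sum_{Y\subseteq X\subseteq Z}u^{|X|}(-1)^{|Z-X|+|X-Y|}$. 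The crux is the inner sum: writing $X=Y\sqcup S$ with $S\subseteq Z\setminus Y$, the two signs collapse to the $X$-independent factor $(-1)^{|Z-X|+|X-Y|}=(-1)^{|Z-Y|}$, after which the binomial theorem gives $\sum_{S}u^{|Y|+|S|}=u^{|Y|}(1+u)^{|Z-Y|}$; combining yields $u^{|Y|}(-u-1)^{|Z-Y|}$ and, after renaming $Z\to X$, exactly the right-hand side of (\ref{u-Integral1}). This sign cancellation is the only delicate bookkeeping, and it is where I expect the computation to need the most care; everything else parallels the elliptic proof step for step.

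Finally, for the specialization I would set $u=-1$. Then $(-u-1)^{|X-Y|}=0^{|X-Y|}$ vanishes unless $Y=X$, collapsing the right-hand side to the diagonal $\sum_{X\subseteq E}(-1)^{|X|}\lambda(\Omega_{X,X^c})$. Using the preceding corollary, $\lambda(\Omega_{X,X^c})=x^{r\langle E\rangle-r\langle X\rangle}y^{n\langle X\rangle}$, so this equals $\sum_{X}(-1)^{|X|}x^{r\langle E\rangle-r\langle X\rangle}y^{n\langle X\rangle}$. On the other hand, expanding Whitney's polynomial (\ref{Rank-Generating-Function}) at $(-x,-y)$ and using $r\langle X\rangle+n\langle X\rangle=|X|$ together with $r(G)=r\langle E\rangle$, the sign of each term is $(-1)^{r\langle E\rangle-r\langle X\rangle+n\langle X\rangle}=(-1)^{r(G)}(-1)^{|X|}$, whence $R(G;-x,-y)=(-1)^{r(G)}\sum_{X}(-1)^{|X|}x^{r\langle E\rangle-r\langle X\rangle}y^{n\langle X\rangle}$. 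Matching the two expressions gives (\ref{R-X-Y}), since on the complementary domain $|\ker f|=|X|$ and the set $\ker f=\supp g$ coincides with $\supp f=\ker g$.
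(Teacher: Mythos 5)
Your proposal is correct and follows essentially the same route as the paper: stratify the domain $\{\supp f=\ker g\}$ as $\bigsqcup_{X\subseteq E}\Omega^0_{X,X^c}$, expand $\nu(\Omega^0_{X,X^c})$ by the M\"obius inversion formula $1_{\Omega^0_{X,Y}}=\sum_{X\subseteq Z,\,Y\subseteq W}(-1)^{|Z-X|+|W-Y|}1_{\Omega_{Z,W}}$, use that $(-1)^{|Z-X|+|X-Y|}=(-1)^{|Z-Y|}$ for $Y\subseteq X\subseteq Z$ together with $\sum_{Y\subseteq X\subseteq Z}u^{|X|}=u^{|Y|}(1+u)^{|Z-Y|}$, and then specialize $u=-1$ so the sum collapses to the diagonal and matches $(-1)^{r(G)}R(G;-x,-y)$ via $r\langle X\rangle+n\langle X\rangle=|X|$. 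All sign computations check out, so there is nothing further to add.
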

\begin{proof} The left-hand side of (\ref{u-Integral1}) can be written as
\begin{eqnarray*}
{\rm LHS}
&=& \sum_{X\subseteq E} u^{|X|} \, \nu\bigl(\Omega^0_{X,X^c}\big) \\
&=& \sum_{X\subseteq E} u^{|X|} \sum_{W\subseteq X\subseteq Z}
(-1)^{|Z-X|+|X-W|} \, \nu\big(\Omega_{Z,W^c}\big) \\
&=& \sum_{W\subseteq Z\subseteq E} (-1)^{|Z-W|} \,
\nu\big(\Omega_{Z,W^c}\big) \sum_{W\subseteq X\subseteq Z} u^{|X|}.
\end{eqnarray*}
Since $\sum_{W\subseteq X\subseteq Z} u^{|X|}=u^{|W|}(u+1)^{|Z-W|}$,
the identity (\ref{u-Integral1}) follows immediately.

Let $\nu=\lambda$ be the unique product valuation and $u=-1$. Note
that $|X|=r\langle X\rangle +n\langle X\rangle$; the right-hand side
of (\ref{u-Integral1}) becomes
\begin{eqnarray*}
\sum_{X\subseteq E} (-1)^{|X|}\, \lambda\big(\Omega_{X,X^c}\big) &=&
 \sum_{X\subseteq E} (-1)^{|X|} x^{r\langle
E\rangle-r\langle X\rangle} y^{n\langle X\rangle} \\
&=& (-1)^{r\langle E\rangle} \sum_{X\subseteq E} (-x)^{r\langle
E\rangle-r\langle X\rangle} (-y)^{n\langle X\rangle}\\
&=& (-1)^{r\langle E\rangle} R(G;-x,-y).
\end{eqnarray*}
\end{proof}

The following corollary is about the expansion formula for the
weighted complementary polynomial $\psi(G;x,y,z,w)$ when the
valuation $\nu$ is taken to be the unique valuation $\lambda$.

\begin{cor}
Let $\nu$ be a valuation on the Boolean algebra of the tension-flow
space $\Omega(G,\varepsilon;A,B)$. Then
\begin{eqnarray}\label{Weighted-Complementary-Integral}
\lefteqn{\iint\limits_{\supp g=\ker f} z^{|\supp f|}\,w^{|\supp g|}
\, {\rm d}
\nu(f,g) =} \nonumber\\
&& \sum_{Y\subseteq X\subseteq E} z^{|E-X|}\,w^{|Y|} (-z-w)^{|X-Y|}
\, \nu(\Omega_{X,Y^c}).
\end{eqnarray}
\end{cor}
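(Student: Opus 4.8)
The plan is to follow exactly the same Möbius-inversion strategy that succeeded in Theorem~\ref{Elliptic-Integral} and Theorem~\ref{Complementary-Ker-F}, since the integrand here differs only in which statistics are being weighted. First I would rewrite the left-hand side as a sum over the decomposition of $\Omega$ into the pieces $\Omega^0_{X,Y}$. On the region $\supp g=\ker f$ we have $\ker f=\ker g^{\,c}$ in the sense that $\supp g=\ker f$ forces the support of $g$ to coincide with the zero-set of $f$; concretely, a tension-flow $(f,g)$ with $\ker f=X$ and $\ker g=Y$ lies in the region precisely when $\supp g=\ker f$, i.e.\ $Y^c=X$, equivalently $Y=X^c$. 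On such a piece the statistics are constant: $|\supp f|=|E-X|$ and $|\supp g|=|X|-|\ker g|$, which under $\ker g=X^c$ gives $|\supp g|=|X|$... so I must be careful and instead index by the pair $(X,Y)$ with $Y\subseteq X$ exactly as the target sum does, reading off $z^{|\supp f|}w^{|\supp g|}$ as a constant on each $\Omega^0_{X,Y^c}$.

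The second step is to substitute the Möbius-inverted identity for $1_{\Omega^0_{X,Y}}$ (established in the paragraph preceding Theorem~\ref{Elliptic-Integral}) into this expression, apply the valuation $\nu$ termwise using its additivity, and then interchange the order of summation so that the outer sum ranges over the pair $(Z,W)$ indexing $\nu(\Omega_{Z,W^c})$ and the inner sum collects all the sign and weight contributions. This reduces the whole problem to evaluating an inner double sum over nested subsets, which is purely combinatorial and handled by the binomial theorem, just as in the two preceding proofs: for fixed $W\subseteq Z$ one factors the sum over $W\subseteq Y\subseteq X\subseteq Z$ into a product of geometric/binomial pieces, first summing out $X$ (or $Y$) and then the remaining variable, yielding a closed form of the shape $(\text{weight})^{|W|}(\text{something})^{|Z-W|}$.

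The matching of the final closed form to the stated right-hand side $\sum_{Y\subseteq X}z^{|E-X|}w^{|Y|}(-z-w)^{|X-Y|}\nu(\Omega_{X,Y^c})$ is then a bookkeeping check: the exponent $|E-X|$ on $z$ should emerge from the $z^{|\supp f|}$ weight recording the edges where $f\neq 0$, the $w^{|Y|}$ from the support of $g$, and the binomial factor $(-z-w)^{|X-Y|}$ from the intermediate edges that may be assigned to either support. Concretely I expect the inner binomial evaluation to produce a factor $(1-w)$ or $(1-z)$ from summing one nesting variable and a factor combining into $-z-w$ from the second; one then relabels the surviving index pair $(Z,W)$ back to $(X,Y)$ to land on the claimed formula.

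The main obstacle I anticipate is purely sign-and-exponent bookkeeping rather than any conceptual difficulty. In particular I must keep straight the three different occurrences of complementation ($Y^c$ in the valuation argument, $X^c$ in the flow support, and the $Y^c$ versus $Y$ indexing), since the weight $z^{|\supp f|}=z^{|E|-|\ker f|}$ converts a $|\ker f|$-style exponent into an $|E-X|$ exponent and it is easy to drop a term like $z^{|E|}$ or mis-handle $|X-Y|=|X|-|Y|$ when factoring $z$ out of the binomial base. The cleanest safeguard is to first pull out the constant factor $z^{|E|}$ (or to substitute $f$'s statistic as $|E|-|\ker f|$ at the very start), reducing the computation to exactly the same inner sum $\sum_{W\subseteq Y\subseteq X\subseteq Z}$ that already appeared, and only at the end restoring the $z^{|E-X|}$ normalization to recover~(\ref{Weighted-Complementary-Integral}).
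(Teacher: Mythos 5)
Your proposal is correct in substance but takes a different route from the paper. The paper derives (\ref{Weighted-Complementary-Integral}) as an immediate consequence of Theorem~\ref{Complementary-Ker-F}: on the region $\supp g=\ker f$ one has $|\supp g|=|\ker f|$ and $|\supp f|=|E|-|\ker f|$, so $z^{|\supp f|}w^{|\supp g|}=z^{|E|}u^{|\ker f|}$ with $u=w/z$, and on the other side $u^{|Y|}(-u-1)^{|X-Y|}=z^{-|X|}w^{|Y|}(-z-w)^{|X-Y|}$; multiplying (\ref{u-Integral1}) through by $z^{|E|}$ gives the claim with no new M\"obius inversion at all. Your main plan---rerunning the inversion directly with the weights $z^{|\supp f|}w^{|\supp g|}$---also works, and it has the small advantage of never dividing by $z$, so it is valid verbatim over any coefficient ring without the implicit homogenization step; note that your closing ``safeguard'' of pulling out $z^{|E|}$ first is precisely the paper's proof. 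Two corrections to your bookkeeping, however. First, the left-hand side decomposes over \emph{single} subsets, not pairs: the region forces $\ker g=(\ker f)^c$, so ${\rm LHS}=\sum_{X\subseteq E}z^{|E-X|}w^{|X|}\,\nu(\Omega^0_{X,X^c})$ (also $|\supp g|=|E|-|\ker g|$, not $|X|-|\ker g|$). Second, after inversion and interchange of summation the inner sum is the one-variable sum $\sum_{W\subseteq X\subseteq Z}$, with \emph{constant} sign because $(-1)^{|Z-X|+|X-W|}=(-1)^{|Z-W|}$ whenever $W\subseteq X\subseteq Z$; the two-variable nested sum $\sum_{W\subseteq Y\subseteq X\subseteq Z}$ you invoke belongs to the elliptic case $\supp f\subseteq\ker g$ of Theorem~\ref{Elliptic-Integral}, where two parameters are free, whereas the parabolic condition $\supp g=\ker f$ leaves only one. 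With these fixes your computation yields $\sum_{W\subseteq Z}z^{|E-Z|}w^{|W|}(-z-w)^{|Z-W|}\,\nu(\Omega_{Z,W^c})$, which is the stated right-hand side after relabeling $(Z,W)$ as $(X,Y)$.
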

\begin{proof}
In formula (\ref{u-Integral1}), let $u=w/z$. Since $\supp g=\ker f$,
then $u^{\ker f}=z^{-|E|}\,z^{|\supp f}\,w^{\supp g}$. The left-hand
side of (\ref{u-Integral1}) is
\[
{\rm LHS}= z^{-|E|}\iint\limits_{\supp g=\ker f} z^{|\supp
f|}\,w^{|\supp g|} \, {\rm d} \nu(f,g).
\]
Note that $u^{|Y|}(-u-1)^{|X-Y|}=z^{-|X|}w^{|Y|}(-z-w)^{|X-Y|}$. The
right-hand side of (\ref{u-Integral1}) is
\[
{\rm RHS}= \sum_{Y\subseteq X\subseteq E} z^{-|X|} w^{|Y|}
(-z-w)^{|X-Y|} \nu(\Omega_{X,Y^c}).
\]
We thus obtain the weighted integration formula
(\ref{Weighted-Complementary-Integral}).
\end{proof}

Now we derive weighted integral formulas for the {\em elliptic}
case: $\ker f\subseteq\supp g$. It is called elliptic because
$(f,g)$ is allowed to have the zero value $(0,0)$ an edge.

\begin{thm}
Let $\nu$ be a valuation on the Boolean algebra of the tension-flow
space $\Omega(G,\varepsilon;A,B)$. Then
\begin{equation}
\iint\limits_{\ker f\subseteq\supp g} u^{|\ker f|}v^{|\supp g|} {\rm
d}\nu(f,g) =\sum_{Z,W\subseteq E} (-1)^{|Z|} h(u,v)\,
\nu(\Omega_{Z,W^c}), \label{UV-Integral2}
\end{equation}
where $h(u,v)=v^{|W|} (1-u)^{|Z\cap W|} (1-v)^{|E-Z\cup W|}
(uv-v+1)^{|Z-W|}$. In particular, if $u=1,v=1$, then
(\ref{UV-Integral2}) reduces to
\begin{equation}\label{UV-Integral3}
\iint\limits_{\ker f\subseteq\supp g} {\rm d}\nu(f,g)
=\sum_{Z\subseteq E} (-1)^{|Z|} \nu(\Omega_{Z}).
\end{equation}
\end{thm}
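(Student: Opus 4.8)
The plan is to mirror the arguments of Theorems~\ref{Elliptic-Integral} and~\ref{Complementary-Ker-F}: expand the left-hand side over the cells $\Omega^0_{X,Y}$ on which $\ker f$ and $\ker g$ are constant, apply the Möbius inversion for these cells established in this section to pass to the subgroups $\Omega_{Z,W^c}$, interchange the order of summation, and evaluate the resulting coefficient by the binomial theorem. Writing $\ker f=X$ and $\ker g=W^c$, so that $\supp g=W$, the defining condition $\ker f\subseteq\supp g$ reads $X\subseteq W$ and the integrand is $u^{|X|}v^{|W|}$; hence
\[
{\rm LHS}=\sum_{X\subseteq W\subseteq E}u^{|X|}v^{|W|}\,\nu(\Omega^0_{X,W^c}).
\]

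Next I would insert the Möbius expansion of $\nu(\Omega^0_{X,W^c})$, which after the substitution $W'=V^c$ (so that $V\subseteq W$) reads $\sum_{X\subseteq Z,\,V\subseteq W}(-1)^{|Z-X|+|W-V|}\nu(\Omega_{Z,V^c})$, and then reorganize the resulting quadruple sum so that each $\nu(\Omega_{Z,V^c})$ carries the coefficient
\[
S(Z,V)=\sum_{\substack{X\subseteq Z\cap W\\ V\subseteq W\subseteq E}}u^{|X|}v^{|W|}(-1)^{|Z-X|+|W-V|}.
\]
The feature distinguishing this case from the two earlier theorems is that the indices are coupled only by the single inclusion $X\subseteq W$, so the outer sum does not collapse to a chain: it runs over \emph{all} pairs $Z,W\subseteq E$, and correspondingly $h(u,v)$ becomes a genuine product over the Venn regions of $Z$ and $W$ rather than a single binomial factor.

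Evaluating $S(Z,V)$ is the one real computation, and I expect the bookkeeping of that double binomial sum to be the main obstacle. I would do the inner $X$-sum first: since $X\subseteq Z\cap W\subseteq Z$ gives $(-1)^{|Z-X|}=(-1)^{|Z|}(-1)^{|X|}$, the binomial theorem yields $\sum_{X\subseteq Z\cap W}(-u)^{|X|}=(1-u)^{|Z\cap W|}$. The remaining sum over $W\supseteq V$ then factors over $E\setminus V$: writing $W=V\sqcup U$ with $U\subseteq V^c$, each element of $Z\cap V^c$ contributes $1-v(1-u)=uv-v+1$, each element of $Z^c\cap V^c$ contributes $1-v$, while the fixed part $V\subseteq W$ supplies $v^{|V|}(1-u)^{|Z\cap V|}$. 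Collecting these gives
\[
S(Z,V)=(-1)^{|Z|}v^{|V|}(1-u)^{|Z\cap V|}(uv-v+1)^{|Z-V|}(1-v)^{|E-Z\cup V|},
\]
which is $(-1)^{|Z|}h(u,v)$ after renaming $V\mapsto W$; this proves (\ref{UV-Integral2}). Finally, for the specialization $u=v=1$ I would note that $h(1,1)=0^{|Z\cap W|}\,0^{|E-Z\cup W|}$ forces $Z\cap W=\emptyset$ and $Z\cup W=E$, i.e.\ $W=Z^c$, in which lone surviving case $h(1,1)=1$ and $\Omega_{Z,W^c}=\Omega_{Z,Z}=\Omega_Z$, so the right-hand side collapses to $\sum_{Z\subseteq E}(-1)^{|Z|}\nu(\Omega_Z)$, which is (\ref{UV-Integral3}).
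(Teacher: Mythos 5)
Your proposal is correct, and its skeleton is exactly the paper's: expand the left-hand side over the cells $\Omega^0_{X,W^c}$ on which $(\ker f,\ker g)$ is constant, insert the M\"{o}bius expansion in terms of the subgroups $\Omega_{Z,V^c}$, interchange the order of summation, and evaluate the resulting coefficient by the binomial theorem; your handling of the $u=v=1$ specialization also matches the paper's. The only genuine divergence is in how the coefficient double sum is evaluated, and there your route is cleaner. The paper sums over the support variable $Y$ first, obtaining $\sum_{W\cup X\subseteq Y}(-v)^{|Y|}=(-v)^{|W\cup X|}(1-v)^{|E-W\cup X|}$, and then disposes of the $X$-sum via the substitutions $a=-u$, $b=uv/(1-v)$ together with a splitting of $X$ into its parts inside and outside $W$; this manipulation divides by $1-v$ (note the factor $\bigl(\tfrac{v}{v-1}\bigr)^{|W\cup X|}$), so it is literally valid only for $v\neq 1$ and extends to $v=1$ only because both sides are polynomials --- a point the paper leaves implicit even though the specialization $v=1$ is precisely what yields (\ref{UV-Integral3}). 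You instead do the $X$-sum first, which collapses at once to $(1-u)^{|Z\cap W|}$ by a clean binomial identity, and then factor the remaining sum over $W\supseteq V$ as a product over the elements of $V^c$ classified by membership in $Z$ (each element of $Z\cap V^c$ contributing $uv-v+1$, each element of $Z^c\cap V^c$ contributing $1-v$). This is division-free, holds verbatim for all $u,v$ including $v=1$, and produces the same $h(u,v)$, so your argument is both correct and marginally more robust than the paper's at the one step where the real computation happens.
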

\begin{proof}
The left-hand side of (\ref{UV-Integral2}) can be written as
\begin{align*}
{\rm LHS} &= \sum_{X\subseteq Y\subseteq E}
u^{|X|}v^{|Y|} \nu(\Omega_{X,Y^c}) \\
&= \sum_{X\subseteq Y\subseteq E} u^{|X|}v^{|Y|} \sum_{X\subseteq
Z,\,W\subseteq Y} (-1)^{|Z-X|+|Y-W|} \nu(\Omega_{Z,W^c})\\
&= \sum_{Z,W\subseteq E} (-1)^{|Z|+|W|}\nu(\Omega_{Z,W^c})
\sum_{X\subseteq Z,\,W\subseteq Y,X\subseteq Y} (-u)^{|X|}
(-v)^{|Y|}.
\end{align*}
The sum $I:=\sum_{X\subseteq Z,\,W\subseteq Y,X\subseteq Y}
(-u)^{|X|} (-v)^{|Y|}$ can be simplified as
\begin{align*}
I &=\sum_{X\subseteq Z} (-u)^{|X|} \sum_{W\cup X\subseteq Y}
(-v)^{|Y|} \\
&=\sum_{X\subseteq Z} (-u)^{|X|} (-v)^{|W\cup X|} (1-v)^{|E-W\cup
X|} \\
&= (1-v)^{|E|} \sum_{X\subseteq Z}
(-u)^{|X|}\Big(\frac{v}{v-1}\Big)^{|W\cup X|}.
\end{align*}
Since $W,Z$ are fixed edge subsets, the edge subset $X\subseteq Z$
can be decomposed into subsets $X_1:=X\cap W\cap Z$ and
$X_2:=X\cap(Z-W)$. Set $a:=-u$ and $b:=uv/(1-v)$. Then $u=-a$,
$v=b/(b-a)$. We have
\begin{align*}
\sum_{X\subseteq Z} (-u)^{|X|}\Big(\frac{v}{v-1}\Big)^{|W\cup X|}
&=\sum_{X_1\subseteq W\cap Z\atop X_2\subseteq Z-W}
a^{|X_1|+|X_2|} \Big(\frac{b}{a}\Big)^{|W|+|X_2|} \\
&=\Big(\frac{b}{a}\Big)^{|W|} \sum_{X_1\subseteq W\cap Z\atop
X_2\subseteq Z-W} a^{|X_1|}b^{|X_2|} \\
&=\Big(\frac{b}{a}\Big)^{|W|}(1+a)^{|W\cap Z|}(1+b)^{|Z-W|}.
\end{align*}
The sum $I$ is then figured out as the following
\begin{align*}
I&=(1-v)^{|E|}\Big(\frac{v}{v-1} \Big)^{|W|}(1-u)^{|W\cap Z|}
\Big(1+\frac{uv}{1-v}\Big)^{|Z-W|}\\
&=(-v)^{|W|} (1-u)^{|W\cap Z|} (1-v)^{|E-W\cup Z|} (uv-v+1)^{|Z-W|}.
\end{align*}
We thus obtain the formula (\ref{UV-Integral2}).

Let $u=1,v=1$. Then $(1-u)^{|Z\cap W|}=0$ unless $Z\cap
W=\emptyset$; $(1-v)^{|E-Z\cup W|}=0$ unless $Z\cup W=E$. So
$h(u,v)\neq 0$ if and only if $Z=W^c$. In fact, $h(u,v)=1$ when
$Z=W^c$. The formula (\ref{UV-Integral3}) follows immediately.
\end{proof}

\begin{cor}
Let $\nu$ be a valuation on the Boolean algebra of the tension-flow
space $\Omega(G,\varepsilon;A,B)$. Then
\[
\iint\limits_{\supp g\subseteq \ker f}  u^{|\supp g|} (u+1)^{|\ker
f-\supp g|}\, \, {\rm d}\nu =\sum_{X\subseteq E} u^{|X|}
\nu(\Omega_{X,X^c}).
\]
\end{cor}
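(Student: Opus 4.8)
The plan is to deduce this corollary directly from Theorem~\ref{Elliptic-Integral}, avoiding any fresh M\"{o}bius computation, because the integration region $\supp g\subseteq\ker f$ is literally the same set of tension-flows as the region $\supp f\subseteq\ker g$ appearing in (\ref{UV-Integral1}): each condition says precisely that no edge $e$ has both $f(e)\neq0$ and $g(e)\neq0$. So the only real work is to match the integrands. On this common region $\supp g\subseteq\ker f$ gives $|\ker f-\supp g|=|\ker f|-|\supp g|$, and therefore
\[
u^{|\supp g|}(u+1)^{|\ker f-\supp g|}=(u+1)^{|\ker f|}\Big(\tfrac{u}{u+1}\Big)^{|\supp g|}.
\]
This is exactly the integrand $u'^{\,|\ker f|}v'^{\,|\supp g|}$ of (\ref{UV-Integral1}) under the specialization $u'=u+1$ and $v'=u/(u+1)$.

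Next I would substitute these values into the right-hand side of (\ref{UV-Integral1}) and evaluate the two parameter combinations that occur there, namely $u'v'=(u+1)\cdot\frac{u}{u+1}=u$ and $u'-u'v'-1=(u+1)-u-1=0$. The right-hand side then becomes
\[
\sum_{Y\subseteq X\subseteq E} u^{|Y|}\,0^{|X-Y|}\,\nu(\Omega_{X,Y^c}),
\]
and since $0^{|X-Y|}=0$ unless $X=Y$ (with the convention $0^0=1$), every off-diagonal term is annihilated and only $\sum_{X\subseteq E}u^{|X|}\nu(\Omega_{X,X^c})$ survives, which is the asserted right-hand side.

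The one step needing a word of justification is the denominator $u+1$ introduced by the specialization $v'=u/(u+1)$. I would handle this by a polynomial-identity argument: for each fixed $(f,g)$ in the region the exponent $|\ker f-\supp g|$ is nonnegative, so the integrand $u^{|\supp g|}(u+1)^{|\ker f-\supp g|}$ is an honest polynomial in $u$, and hence both sides of the claimed identity are polynomials in $u$; as they agree for every scalar $u\neq-1$ by the substitution above, they agree identically. I expect this to be the only mild obstacle, everything else being a formal specialization of the already-proved formula (\ref{UV-Integral1}); no new inclusion-exclusion is required.
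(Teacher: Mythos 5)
Your proposal is correct and follows essentially the same route as the paper's own proof: the paper likewise rewrites the integrand as $(u+1)^{|\ker f|}\left(\frac{u}{u+1}\right)^{|\supp g|}$ and applies (\ref{UV-Integral1}) with $u'=u+1$, $v'=u/(u+1)$, so that $u'v'=u$, $u'-u'v'-1=0$, and the sum collapses to the diagonal terms $X=Y$. Your extra justification of the division by $u+1$ (treating both sides as polynomials in $u$) addresses a point the paper passes over silently, but it is a refinement of the same argument rather than a different method.
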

\begin{proof} Rearranged the variables, the left-hand side is given by
\begin{eqnarray*}
{\rm LHS} &=& \iint\limits_{\supp g\subseteq \ker f} (u+1)^{|\ker
f|}\,\left(\frac{u}{u+1}\right)^{|\supp g|} \, {\rm d}\nu\\
&=& \sum_{Y\subseteq X\subseteq E} u^{|Y|}
0^{|X-Y|}\nu(\Omega_{X,Y^c})\\
&=& \sum_{X\subseteq E} u^{|X|} \nu(\Omega_{X,X^c}).
\end{eqnarray*}
\end{proof}

\centerline{\bf Appendix 1: Relative Boolean algebra} \vspace{1ex}

Relative Boolean algebras are slightly different from Boolean
algebras; the former is closed under the set operation of relatively
complement, the latter is closed under complement. The following two
lemmas provide with us a general pattern for an element in a
relative Boolean algebra and the pattern for elements in the product
of relative Boolean algebras.

\begin{lem}
Let $\mathscr B$ be a relative Boolean algebra generated by an
intersectional class $\mathcal L$. Then $\mathscr B$ consists of
sets of the form
\begin{equation}\label{Relative-Boolean-Formula}
\bigcup_{i\in I}\Big(A_i-\bigcup_{k\in I_i}A_{i,k}\Big),
\end{equation}
where $A_i,A_{i,k}\in\mathcal L$, and the union extended over $I$
can be made into disjoint.
\end{lem}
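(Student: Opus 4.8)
The plan is to realize $\mathscr B$ explicitly as the family $\mathcal F$ of all \emph{finite} unions of \emph{elementary differences} $A-\bigcup_{k}A_{k}$ with $A,A_{k}\in\mathcal L$, and then to refine any such union into a disjoint one. First I would record the two easy inclusions. Every $A\in\mathcal L$ is the elementary difference $A-\bigcup_{k\in\emptyset}A_{k}$ (the empty inner union being $\emptyset$), so $\mathcal L\subseteq\mathcal F$; and each elementary difference lies in $\mathscr B$ because $\mathscr B$ is closed under finite union and relative complement, whence $\mathcal F\subseteq\mathscr B$. It then remains to prove that $\mathcal F$ is itself a relative Boolean algebra, for then minimality of $\mathscr B$ forces $\mathscr B\subseteq\mathcal F$ and hence $\mathscr B=\mathcal F$, which is exactly the asserted form~\eqref{Relative-Boolean-Formula}.

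Closure of $\mathcal F$ under finite union is immediate. For intersection I would use the single identity
\[
\Big(A-\bigcup_{k}A_{k}\Big)\cap\Big(B-\bigcup_{l}B_{l}\Big)
=(A\cap B)-\Big(\bigcup_{k}A_{k}\cup\bigcup_{l}B_{l}\Big),
\]
in which $A\cap B\in\mathcal L$ because $\mathcal L$ is intersectional; distributing over the outer unions then shows $X\cap Y\in\mathcal F$ whenever $X,Y\in\mathcal F$. \textbf{The hard part is closure under relative complement.} Here I would write $X=\bigcup_{i}P_{i}$ and $Y=\bigcup_{j}Q_{j}$ with $P_{i},Q_{j}$ elementary differences and peel off $Y$ by the De Morgan rearrangement
\[
X-Y=\bigcup_{i}\bigcap_{j}\big(P_{i}-Q_{j}\big),
\]
so that, granted closure under finite intersection, it suffices to put a single $P_{i}-Q_{j}$ into $\mathcal F$. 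Writing $Q_{j}=B_{j}-\bigcup_{l}B_{j,l}$ and expanding its complement gives
\[
P_{i}-Q_{j}=(P_{i}-B_{j})\cup\bigcup_{l}(P_{i}\cap B_{j,l}),
\]
where $P_{i}-B_{j}$ is again an elementary difference (absorb $B_{j}\in\mathcal L$ into the subtracted union) and each $P_{i}\cap B_{j,l}$ is an elementary difference with leading set $A_{i}\cap B_{j,l}\in\mathcal L$; thus $P_{i}-Q_{j}\in\mathcal F$ and the closure follows.

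For the final, disjointness claim I would not disjointify the outer union directly but pass to atoms. Given $X\in\mathscr B$ in the form~\eqref{Relative-Boolean-Formula}, let $L_{1},\dots,L_{N}\in\mathcal L$ enumerate all the sets $A_{i},A_{i,k}$ occurring, and consider the finite set algebra they generate. Its nonempty atoms are the sets $\bigcap_{t\in T}L_{t}-\bigcup_{t\notin T}L_{t}$ for $T\subseteq\{1,\dots,N\}$; they are pairwise disjoint, and every Boolean combination of the $L_{t}$, in particular $X$, is the disjoint union of the atoms it contains. For $T\neq\emptyset$ the leading set $\bigcap_{t\in T}L_{t}$ lies in $\mathcal L$, so such an atom is a genuine elementary difference. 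Since each $P_{i}\subseteq A_{i}$ we have $X\subseteq\bigcup_{t}L_{t}$, so the single atom with $T=\emptyset$ (whose leading set is the whole space $S$ and need not lie in $\mathcal L$) is disjoint from $X$ and may be discarded; this is the one subtlety to watch. Hence $X$ is a disjoint union of elementary differences, which completes the refinement.
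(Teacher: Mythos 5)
Your proposal is correct, and its skeleton coincides with the paper's: both realize $\mathscr B$ as the family of all finite unions of elementary differences $A-\bigcup_{k}A_{k}$ with $A,A_k\in\mathcal L$, verify that this family is itself a relative Boolean algebra containing $\mathcal L$, and conclude by minimality. The genuine difference lies in the one hard step, closure under relative complement. The paper reduces $B-A$ to showing $B_0\cap A^c\in\mathscr B'$ for a single $B_0\in\mathcal L$, and then expands $A^c=\bigcap_{i\in I}\bigl(A_i^c\cup\bigcup_{k\in I_i}A_{i,k}\bigr)$ distributively, producing a union indexed by all subsets $J\subseteq I$ together with choice tuples $(k_j)\in\prod_{j\in J}I_j$, with case-by-case handling of the empty-index conventions. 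You instead apply De Morgan, $X-Y=\bigcup_i\bigcap_j(P_i-Q_j)$, and reduce --- using closure under finite intersection, which you establish first, so there is no circularity --- to the difference of two single elementary differences, which the identity $P_i-Q_j=(P_i-B_j)\cup\bigcup_l(P_i\cap B_{j,l})$ settles in one line (the first term absorbs $B_j$ into the subtracted union; the second has leading set $A_i\cap B_{j,l}\in\mathcal L$). This buys a shorter argument with no combinatorial expansion over subsets of the index set. Your handling of disjointness is also a genuine addition rather than a variant: the lemma asserts the outer union ``can be made into disjoint,'' but the paper's proof never addresses this claim, whereas your atom decomposition of the finite set algebra generated by the occurring $\mathcal L$-sets proves it, and you correctly isolate the only delicate point --- the atom with $T=\emptyset$ has leading set $S$, which need not lie in $\mathcal L$ (under the paper's convention the empty intersection is not a finite intersection), but since $X\subseteq\bigcup_t L_t$ that atom is disjoint from $X$ and can be discarded.
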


\begin{proof}
Let $\mathcal B'$ denote the class of sets of the form
(\ref{Relative-Boolean-Formula}). It is clear that $\mathcal
B'\subseteq\mathcal B$. It suffices to show that $\mathcal B'$ is a
relative Boolean algebra. Let
\[
A=\bigcup_{i\in I}\Big(A_i-\bigcup_{k\in I_i}A_{i,k}\Big), \sp
B=\bigcup_{j\in J}\Big(B_j-\bigcup_{l\in J_j}B_{j,l}\Big),
\]
where $A_i,A_{i,k},B_j,B_{j,l}\in\mathcal L$, the index sets $I,J$
are finite, and the subindex sets $I_i,J_j$ are finite for all $i\in
I,j\in J$. Clearly, the union $A\cup B$ is of the form
(\ref{Relative-Boolean-Formula}). Since $A_i\cap B_j\in\mathcal L$,
the intersection
\begin{eqnarray*}
A\cap B &=& \bigcup_{i\in I,j\in J} \Big[\Big(A_i-\bigcup_{k\in
I_i}A_{i,k}\Big)\cap\Big(B_j-\bigcup_{k\in J_j}B_{j,l}\Big)\Big]\\
&=& \bigcup_{i\in I,j\in J}\Big(A_i\cap B_j-\bigcup_{k\in I_i,l\in
J_j}A_{i,k}\cup B_{j,l}\Big)
\end{eqnarray*}
is also of the form (\ref{Relative-Boolean-Formula}). So $\mathcal
B'$ is closed under finite intersections and finite unions. As for
the relative complement, note that
\[
B-A=\bigcup_{j\in J}\Big[B_j\cap\Big(\bigcup_{l\in
J_j}B_{j,l}\Big)^c\cap A^c\Big].
\]
Since $\mathcal{B}'$ is closed under finite unions, it suffices to
show that for each fixed index $j$ in $J$, the intersection
\[
B_j\cap\Big(\bigcup_{l\in J_j}B_{j,l}\Big)^c\cap A^c=\bigcap_{l\in
J_j} B_j\cap(B_{j,l}\cup A)^c.
\]
belongs to $\mathcal{B}'$. In fact, if $J_j$ is empty, the union
$\cup_{l\in\emptyset}B_{j,l}$ is the empty set, and the intersection
is $B_j\cap A^c$; if $J_j$ is not empty, then it is enough to show
that $B_j\cap(B_{j,l}\cup A)^c$ belongs to $\mathcal{B}'$, for
$\mathcal{B}'$ is closed under finite intersections. Since
$B_j,B_{j,l}\in\mathcal{L}$ and $B_{j,l}\cup A$ is of the form
(\ref{Relative-Boolean-Formula}), we are left to show only that
$B_0\cap A^c$ is a member of $\mathcal{B}'$ for $B_0\in\mathcal{L}$
and $A\in\mathcal{B}'$.

Now let $A$ be written in the form (\ref{Relative-Boolean-Formula})
and $B_0$ a member of $\mathcal{L}$. Let us write the index set
$I=\{1,2,\ldots,n\}$. Note that $A$ can be written as
\begin{eqnarray*}
A^c &=&  \bigcap_{i\in I} \Big(A_i-\bigcup_{k\in
I_i}A_{i,k}\Big)^{\!c} \\
&=& \bigcap_{i\in I} \Big(A_i^c\cup\bigcup_{k\in I_i} A_{i,k}\Big) \\
&=& \Big(A_1^c\cup\bigcup_{k\in I_1}A_{1,k}\Big) \cap
\cdots\cap\Big(A_n^c\cup\bigcup_{k\in I_n}A_{n,k}\Big).
\end{eqnarray*}
Expanding this intersection into unions, we have
\begin{eqnarray}
B_0\cap A^c &=& \bigcup_{J\subseteq I}
\Big[B_0\cap\Big(\bigcap_{j\in J}\bigcup_{k\in I_j} A_{j,k}\Big)\cap
\bigcap_{i\in
I-J}A_{i}^c\Big] \nonumber\\
&=& \bigcup_{J\subseteq I,\atop (k_j)\in\prod_{j\in J}I_j}
\Big(B_0\cap \bigcap_{j\in
J} A_{j,k_j}\cap \bigcap_{i\in I-J}A_{j}^c\Big) \nonumber\\
&=& \bigcup_{J\subseteq I\atop (k_j)\in\prod_{j\in J}I_j}
\Big(B_0\cap \bigcap_{j\in J} A_{j,k_j} - \bigcup_{i\in
I-J}A_{i}\Big), \label{BAC}
\end{eqnarray}
where the intersection $\cap_{i\in\emptyset}$ of none of sets is
assumed to be the whole set. When $J=\emptyset$, then $\prod_{j\in
J}I_j$ contains exactly one element $(k_j)$ (the null element), and
the intersection $B_0\cap_{j\in J} A_{j,k_j}$ is just $B_0$. When
$J\neq\emptyset$, if there is at least one $j\in J$ such that $J_j$
is empty, then $\prod_{j\in J}I_j$ contains exactly one element
$(k_j)$ (the null element), and the intersection $B_0\cap_{j\in J}
A_{j,k_j}$ is also $B_0$; if $J_j\neq\emptyset$ for all $j\in J$,
since $\mathcal{L}$ is intersectional, then $B_0\cap_{j\in J}
A_{j,k_j}$ is a member of $\mathcal{L}$. Thus the terms in the union
(\ref{BAC}) are the sets of the form
(\ref{Relative-Boolean-Formula}). Since $\mathcal{B}'$ is closed
under finite unions, therefore $B_0\cap A^c$ belongs to
$\mathcal{B}'$.
\end{proof}

Let $\mathscr{L}_i$ be intersectional classes of non-empty sets
$\Omega_i$, and $\mathscr{B}_i$ the relative Boolean algebras
generated by $\mathscr{L}_i$, $i=1,2$. Let $\mathscr
L_1\times\mathscr L_2$ be the smallest intersectional class of
$\Omega_1\times\Omega_2$ that contains the products $A_1\times A_2$,
where $A_i\in\mathscr{L}_i$; and $\mathscr B_1\times\mathscr B_2$
the relative Boolean algebra generated by $\mathscr
L_1\times\mathscr L_2$. The following lemma can be easily modified
to the product of more factors.

\begin{lem}\label{Product-Boolean}
Every member of $\mathscr B_1\times\mathscr B_2$ is a finite
disjoint union of products $B_1\times B_2$, where $B_i\in\mathscr
B_i$.
\end{lem}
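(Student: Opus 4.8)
The plan is to introduce the collection $\mathcal{C}$ of all finite disjoint unions $\bigsqcup_\alpha (P_\alpha\times Q_\alpha)$ with $P_\alpha\in\mathscr{B}_1$ and $Q_\alpha\in\mathscr{B}_2$, and to prove that $\mathcal{C}$ is a relative Boolean algebra containing the generating class $\mathscr L_1\times\mathscr L_2$. Once this is established, the minimality of $\mathscr B_1\times\mathscr B_2$ (it being the smallest relative Boolean algebra containing $\mathscr L_1\times\mathscr L_2$) forces $\mathscr B_1\times\mathscr B_2\subseteq\mathcal{C}$, which is exactly the assertion of the lemma; note that only this one inclusion is needed. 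First I would check that $\mathcal{C}$ contains the generators: a typical member of $\mathscr L_1\times\mathscr L_2$ is a finite intersection of products $A_1\times A_2$ with $A_i\in\mathscr{L}_i$, and such an intersection collapses to the single product $\bigl(\bigcap_m A_1^{(m)}\bigr)\times\bigl(\bigcap_m A_2^{(m)}\bigr)$ whose factors lie in $\mathscr{L}_i\subseteq\mathscr{B}_i$; a single product is a one-term disjoint union of rectangles, hence belongs to $\mathcal{C}$.

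Next I would verify closure under finite intersection. Given $X=\bigsqcup_\alpha(P_\alpha\times Q_\alpha)$ and $Y=\bigsqcup_\beta(R_\beta\times S_\beta)$ in $\mathcal{C}$, the distributive law yields
\[
X\cap Y=\bigsqcup_{\alpha,\beta}\bigl(P_\alpha\cap R_\beta\bigr)\times\bigl(Q_\alpha\cap S_\beta\bigr),
\]
where each factor lies in the respective $\mathscr{B}_i$ (closed under intersection), and the union is disjoint because distinct indices $\alpha$ (respectively $\beta$) already give disjoint rectangles $P_\alpha\times Q_\alpha$ (respectively $R_\beta\times S_\beta$), so subsets of these remain disjoint. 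Hence $X\cap Y\in\mathcal{C}$.

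The crucial step is closure under relative complement. The engine here is the single-rectangle identity
\[
(P\times Q)\setminus(R\times S)=\bigl((P\setminus R)\times Q\bigr)\sqcup\bigl((P\cap R)\times(Q\setminus S)\bigr),
\]
which splits the difference according to whether the first coordinate avoids $R$; both rectangles on the right have factors in $\mathscr{B}_i$ (closed under intersection and relative complement), so the right side lies in $\mathcal{C}$. For general $X,Y\in\mathcal{C}$, I would write $X\setminus Y=\bigsqcup_\alpha\bigl((P_\alpha\times Q_\alpha)\setminus Y\bigr)$, and for each fixed $\alpha$ use
\[
(P_\alpha\times Q_\alpha)\setminus Y=\bigcap_\beta\bigl((P_\alpha\times Q_\alpha)\setminus(R_\beta\times S_\beta)\bigr)
\]
together with the displayed identity and the already-proved closure under intersection to place $(P_\alpha\times Q_\alpha)\setminus Y$ in $\mathcal{C}$; the outer union over $\alpha$ is disjoint, so $X\setminus Y\in\mathcal{C}$. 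Finally, closure under finite union follows formally from $X\cup Y=X\sqcup(Y\setminus X)$, a disjoint union of two members of $\mathcal{C}$. Thus $\mathcal{C}$ is a relative Boolean algebra, and the proof concludes as indicated.

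I expect the relative-complement step to be the only genuine obstacle: one must both produce the single-rectangle splitting and carefully track disjointness through the intersection over $\beta$ and the union over $\alpha$, so that the final expression is an honest disjoint union of rectangles rather than merely a set-theoretic combination. The reduction of union to difference, and the intersection computation, are routine once disjointness is handled correctly.
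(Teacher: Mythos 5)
Your proof is correct, and it follows the same overall strategy as the paper: both define the class of finite disjoint unions of rectangles $B_1\times B_2$ ($B_i\in\mathscr B_i$), prove that this class is itself a relative Boolean algebra containing the generators, and conclude by minimality of $\mathscr B_1\times\mathscr B_2$. The genuine difference lies in the relative-complement step, which is exactly where you predicted the difficulty. You dispose of it with the single-rectangle identity $(P\times Q)\setminus(R\times S)=\bigl((P\setminus R)\times Q\bigr)\sqcup\bigl((P\cap R)\times(Q\setminus S)\bigr)$, then write $(P_\alpha\times Q_\alpha)\setminus Y$ as an intersection over $\beta$ and recycle the already-proved closure under intersection, so disjointness is maintained automatically. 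The paper instead decomposes each $(B_{1,j}\times B_{2,j})\cap(A_{1,i}\times A_{2,i})^c$ into \emph{three} product sets $A_{i,j,1},A_{i,j,2},A_{i,j,3}$ and distributes the intersection over $i$ across these unions, indexing the resulting pieces by tuples in $\{1,2,3\}^I$; note that as printed the paper's three sets are not actually disjoint (one has $A_{i,j,3}=A_{i,j,1}\cap A_{i,j,2}$), and the argument only works after replacing them by the corrected triple $(B_{1,j}-A_{1,i})\times(B_{2,j}\cap A_{2,i})$, $(B_{1,j}\cap A_{1,i})\times(B_{2,j}-A_{2,i})$, $(B_{1,j}-A_{1,i})\times(B_{2,j}-A_{2,i})$. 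Your two-piece splitting avoids this pitfall entirely and shortens the bookkeeping. Two further small merits of your write-up: you explicitly verify that the class contains $\mathscr L_1\times\mathscr L_2$ (members of the product semilattice collapse to single rectangles), a point the paper leaves implicit but which is needed to invoke minimality, and you correctly observe that only the inclusion $\mathscr B_1\times\mathscr B_2\subseteq\mathcal C$ is required for the lemma, whereas the paper also records the reverse inclusion to get equality.
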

\begin{proof}
Let $\mathscr B$ denote the class of finite disjoint unions of
products $B_1\times B_2$ with $B_i\in\mathscr B_i$. It suffices to
show that $\mathscr B$ is a relative Boolean algebra. Let
\[
A=\bigsqcup_{i\in I} A_{1,i}\times A_{2,i},\sp B=\bigsqcup_{j\in J}
B_{1,j}\times B_{2,j},
\]
where $A_{1,i}\in\mathscr{B}_1$, $B_{2,j}\in\mathscr{B}_2$. Then
\[
A\cap B = \bigsqcup_{i\in I,j\in J} (A_{1,i}\cap
B_{1,j})\times(A_{2,i}\cap B_{2,j}) \in \mathscr B.
\]
The relative complement $B-A$ can be written as
\begin{eqnarray*}
B-A &=& \bigsqcup_{j\in J} \Bigl((B_{1,j}\times
B_{2,j})\cap\bigcap_{i\in I}(A_{1,i}\times
A_{2,i})^c\Bigr)\\
&=& \bigsqcup_{j\in J} \bigcap_{i\in I} \Bigl((B_{1,j}\times
B_{2,j})\cap(A_{1,i}\times A_{2,i})^c\Bigr),
\end{eqnarray*}
where $(B_{1,j}\times B_{2,j})\cap(A_{1,i}\times A_{2,i})^c$ can be
further written as a disjoint union of three product sets
\begin{align}
A_{i,j,1}: & =(B_{1,j}-A_{1,i})\times B_{2,j}, \nonumber\\
A_{i,j,2}: & =B_{1,j}\times(B_{2,j}-A_{2,i}), \nonumber\\
A_{i,j,3}: & =(B_{1,j}-A_{1,i})\times (B_{2,j}-A_{2,i}).\nonumber
\end{align}
Write $[3]:=\{1,2,3\}$. For each tuple $(k_i)\in\prod_{i\in I}[3]$,
it is easy to see that $\cap_{i\in I}A_{i,j,k_i}$ is a product of
the form $B_1\times B_2$, where $B_l\in\mathscr{B}_l$, $l=1,2$. Thus
\[
B-A=\bigsqcup_{j\in J}\bigcap_{i\in I}\bigsqcup_{k=1}^3 A_{i,j,k}
=\bigsqcup_{j\in J}\bigsqcup_{(k_i)\in\prod_{i\in I}[3]}
\bigcap_{i\in I} A_{i,j,k_i}\in\mathscr B.
\]
Likewise, $A-B\in\mathscr B$. Hence $A\cup
B=(B-A)\sqcup(A-B)\sqcup(A\cap B)\in\mathscr B$. This means that
$\mathscr B$ is a relative Boolean algebra. Clearly, $\mathscr
B\subseteq\mathscr B_1\times\mathscr B_2$. Since $\mathscr
B_1\times\mathscr B_2$ is the smallest relative Boolean algebra
containing $\mathscr L_1\times\mathscr L_2$, we conclude that
$\mathscr B=\mathscr B_1\times\mathscr B_2$.
\end{proof}

Lemma~\ref{Product-Boolean} can be easily modified to the product of
arbitary finite number of factors of relative Boolean
algebras.\vspace{1.5ex}

\centerline{\bf Appendix 2: Graph preliminaries} \vspace{1ex}

Let $G=(V,E)$ be a graph (loops and multiple edges are allowed). A
subgraph $H$ of $G$ is said to be {\em Eulerian} if the degree of
every vertex for $H$ is even. For instance, a connected Eulerian
subgraph is just a closed walk without overlapping edges, called a
{\em closed trail}. A closed simple path is called a {\em circuit}.
For a non-empty proper subset $S$ of $V$, we denote by $[S,S^c]$ the
set of all edges between $S$ and its complement $S^c:=E-S$. By a
{\em cut} of $G$ we mean a non-empty edge subset of the form
$[S,S^c]$, where $S\subsetneq V$. A {\em bond} of $G$ is a cut that
does not contain properly any cut. Every Eulerian subgraph is an
edge-disjoint union of circuits, and every cut is an edge-disjoint
union of bonds.

A (combinatorial) {\em orientation} on $G$ is a (multi-valued)
function $\varepsilon:V\times E\rightarrow{\Bbb Z}$ such that (i)
$\varepsilon(v,e)$ has two values $\pm1$ if $e$ is a loop at a
vertex $v$ and has a single value otherwise, (ii)
$\varepsilon(v,e)=0$ if $v$ is not an end-vertex of $e$, and (iii)
$\varepsilon(u,e)\varepsilon(v,e)=-1$ if $u,v$ are distinct
end-vertices of $e$. Pictorially, an orientation on an edge $e$ can
be viewed as that $e$ is equipped with an arrow or direction from
its one end-vertex $u$ to the other end-vertex $v$; such information
is encoded by $\varepsilon(u,e)=1$ and $\varepsilon(v,e)=-1$. So
each loop has exactly one orientation combinatorially (however,
geometrically or topologically, we may assume that each loop has
exactly two orientations); a non-loop edge has exactly two
orientations. A graph $G$ with an orientation $\varepsilon$ is
referred to a {\em digraph} $(G,\varepsilon)$.

A {\em local direction} of an Eulerian subgraph $H$ is an
orientation of $H$ such that the in-degree equals the out-degree at
every vertex of $H$. An Eulerian subgraph may have several local
directions. However, if an Eulerian subgraph $H$ is connected and is
written as a closed trail $W$, then there are exactly two local
directions on $H$ so that $W$ becomes a directed trail; either of
such two local directions on $H$ is called a {\em direction} of $W$.
Every circuit has exactly two directions. An Eulerian subgraph with
a local direction is called a {\em directed Eulerian subgraph.}
Every directed Eulerian subgraph is an edge disjoint union of
directed circuits.

A {\em direction} of a cut $U=[S,S^c]$ is an orientation on $U$ such
that the arrows on its edges are all from $S$ to $S^c$ or all from
$S^c$ to $S$. Any cut has exactly two directions; a cut with a
direction is called a {\em directed cut}. A {\em local direction} of
a cut $U$ is an orientation $\varepsilon_\textsc{u}$ on $U$ such
that $(U,\varepsilon_\textsc{u})$ is a disjoint union of directed
bonds. A cut may have several local directions.

Let $(H_i,\varepsilon_i)$ be directed subgraphs of $G$, $i=1,2$. The
{\em coupling} of $\varepsilon_1$ and $\varepsilon_2$ is a function
$[\varepsilon_1,\varepsilon_2]:E\rightarrow{\Bbb Z}$, defined for
each edge $e$ (at its end-vertex $v$) by
\begin{equation}
[\varepsilon_1,\varepsilon_2](e)=\left\{\begin{array}{rl}
1 & \mbox{if $e\in E(H_1)\cap E(H_2)$, $\varepsilon_1(v,e)=\varepsilon_2(v,e)$,} \\
-1 & \mbox{if $e\in E(H_1)\cap E(H_2)$, $\varepsilon_1(v,e)\neq \varepsilon_2(v,e)$.}\\
0 & \mbox{otherwise}.
\end{array}\right.
\end{equation}

Let $(G,\varepsilon)$ be a digraph throughout the whole paper. Let
$A$ be an abelian group. A function $f:E\rightarrow A$ is called a
{\em tension} (or {\em $A$-tension}) of $(G,\varepsilon)$ if for
each directed circuit $(C,\varepsilon_{\textsc c})$,
\begin{equation}\label{Tension-Definition}
\sum_{e\in C}[\varepsilon,\varepsilon_{\textsc c}](e) f(e)=0.
\end{equation}
See \cite{Berge1} for more details. The set $T(G,\varepsilon;A)$ of
all $A$-tensions forms an abelian group under the obvious addition,
called the {\em tension group} of $(G,\varepsilon)$ with values in
$A$. We denote by $T_{\rm nz}(G,\varepsilon;A)$ the set of all
nowhere-zero $A$-tensions. For each directed bond $(B,\varepsilon_
{\textsc b})$, the coupling $[\varepsilon,\varepsilon_{\textsc b}]$
is an integer-valued tension of $(G,\varepsilon)$, called a {\em
bond characteristic vector}. It is well-known that the integral
tension lattice $T(G,\varepsilon;{\Bbb Z})$ is a $\Bbb Z$-span of
its bond characteristic vectors.

A function $f:E\rightarrow A$ is called a {\em flow} (or {\em
$A$-flow}) of $(G,\varepsilon)$ if it satisfies the {\em
conservation law:}
\begin{equation}\label{Flow-Definition}
\sum_{e\in E}\varepsilon(v,e)f(e)=0
\end{equation}
at every vertex $v$ of $V$, where a loop is counted twice at its
end-vertex with opposite values. The set $F(G,\varepsilon;A)$ of all
$A$-flows forms an abelian group under the obvious addition, called
the {\em flow group} of $(G,\varepsilon)$ with values in $A$. We
denote by $F_{\rm nz}(G,\varepsilon;A)$ the set of all nowhere-zero
$A$-flows. For each directed circuit $(C,\varepsilon_{\textsc c})$,
the coupling $[\varepsilon,\varepsilon_{\textsc c}]$ is an
integer-valued flow of $(G,\varepsilon)$, called a {\em circuit
characteristic vector}. It is well-known that $F(G,\varepsilon;{\Bbb
Z})$ is a ${\Bbb Z}$-span of its circuit characteristic vectors.

Let $q$ be a positive integer. A {\em real} ({\em integral}) {\em
$q$-tension} ({\em $q$-flow}) of $(G,\varepsilon)$ is a real-valued
(integer-valued) tension (flow) $f$ such that $|f(e)|<q$ for all
$e\in E$. Let $T_{\rm nz\mathbbm{z}}(G,\varepsilon;q),F_{\rm
nz\mathbbm{z}}(G,\varepsilon;q)$ denote the sets of all nowhere-zero
integral $q$-tensions, $q$-flows of $(G,\varepsilon)$, respectively.
It is known (see \cite{Beck-Zaslavsky1, Chen-I, Chen-II, Kochol1,
Kochol2}) that the counting functions
\begin{align}\label{integral-tension-polynomial}
\tau_{\mathbbm{z}}(G,q): = |T_{\rm nz\mathbbm{z}}(G,\varepsilon;q)|,
\sp \varphi_{\mathbbm{z}}(G,q): = |F_{\rm
nz\mathbbm{z}}(G,\varepsilon;q)|
\end{align}
are polynomial functions of positive integers $q$ of degree $r(G),
n(G)$, called the {\em integral tension polynomial} and the {\em
integral flow polynomial} of $G$, respectively, and are independent
of the chosen orientation $\varepsilon$, where $r(G)$ is the number
of edges of a maximal forest and $n(G)$ is the number of independent
circuits of $G$.

If $|A|=q$ is finite, it is well known that the counting functions
\begin{equation}\label{modular-flow-tension-polynomial}
\tau(G,q):=|T_{\rm nz}(G,\varepsilon;A)|,\sp \varphi(G,q):=|F_{\rm
nz}(G,\varepsilon;A)|
\end{equation}
are polynomial functions of $q$ of degree $r(G),n(G)$, called the
{\em tension polynomial} and the {\em flow polynomial} of $G$,
respectively, and are independent of the chosen orientation
$\varepsilon$ and the abelian group structure of $A$.

Let $A^V$ denote the abelian group of all functions (called {\em
colorations} or {\em potentials}) from $V$ to $A$. Recall that a
coloration $f$ is said to be {\em proper} if $f(u)\neq f(v)$ for
adjacent vertices $u,v$. Let $C_{\rm nz}(G,A)$ denote the set of all
proper colorations of $G$. If $|A|=q$ is finite, it is well-known
that the counting function
\begin{gather}\label{Chromatic-Poly}
\chi(G,q):=|C_{\rm nz}(G,A)|
\end{gather}
is a polynomial function of $q$, known as the {\em chromatic
polynomial} of $G$, depending only on the order of $A$. It is easy
to see that
\begin{equation}\label{Chromatic-Tension-Relation}
\chi(G,t)=t^{c(G)}\tau(G,t),
\end{equation}
where $c(G)$ is the number connected components of $G$.

There is a {\em boundary operator} $\partial_\varepsilon:
A^E\rightarrow A^V$, defined for $f\in A^E$ by
\begin{equation}\label{Bounary-Operator}
(\partial_\varepsilon f)(v)=\sum_{e\in E} \varepsilon(v,e)f(e).
\end{equation}
Then $\ker\partial_\varepsilon$ is the flow group
$F(G,\varepsilon;A)$. Potentials and tensions are naturally related
by the {\em co-boundary ({\rm or}\! difference) operator}
$\delta_\varepsilon: A^V\rightarrow A^E$, defined for $f\in A^V$ by
\begin{eqnarray}
(\delta_\varepsilon f)(e)=f(u)-f(v),
\end{eqnarray}
where $e$ is an edge whose orientation is from one end-vertex $u$ to
the other end-vertex $v$. Then ${\rm im}\,\delta$ is the tension
group $T(G,\varepsilon;A)$.

\bibliographystyle{amsalpha}

\begin{thebibliography}{99}


\bibitem{Beck-Zaslavsky1}
M. Beck and T. Zaslavsky, The number of nowhere-zero flows of graphs
and signed graphs, {\em J. Combin. Theory Ser. B} {\bf 96} (2006),
901--918.

\bibitem{Berge1}
C. Berge, {\em Graphs and Hypergraphs},  American Elsevier
publishing Co., Inc. New York, 1973.

\bibitem{Birkhoff1}
G.D. Birkhorff, A determinant formula for the number of ways of
coloring a map, {\em Ann. Math.} {\bf 14} (1912), 42--46.




\bibitem{Bollobas1}
B. Bollob\'as, {\em Mordern Graph Theory}, Springer, 2002.

\bibitem{Bondy-Murty1}
J.A. Bondy and U.S.R. Murty, {\em Graph Theory with Applications},
Elsevier Science Publishing Co., Inc., 1976.

\bibitem{BS-1}
F. Breuer and R. Sanyal, Ehrhart theory, modular folow reciprocity,
and the Tutte polynomial, arXiv.0907.0845v1 [math.CO] 5 July 2009.

\bibitem{Brylawski-Oxley1}
T. Brylawski and J.G. Oxley, The Tutte polynomial and its
applications, in: {\em Matroid Applications}, N. White (ed.), {\em
Encyclopedia of Mathematics and Its Applications,} Vol. 40,
Cambridge Unversity Press, 1992.


\bibitem{Chen-I}
B. Chen, Orientations, lattice polytopes, and group arrangements I:
Chromatic and tension polynomials of graphs, {\em Ann. Comb.} {\bf
13} (2010), 425--452.

\bibitem{Chen-Characteristic-Polynomial}
B. Chen, On characteristic polynomials of subspace arrangements,
{\em J. Combin. Theory Ser. A}\; {\bf 90} (2000), 347--352.

\bibitem{Chen-Lattice-Points}
B. Chen, Lattice points, Dedekind sums, and Ehrhart polynomials of
lattice polyhedra, {\em Discrete Comput. Geom.} {\bf 28} (2002),
175--199.

\bibitem{Chen-Ehrhart-Polynomial}
B. Chen, Ehrhart polynomials of lattice polyhedral functions, In:
{\em Integer Points in Polyhedra -- Geometry, Number Theory,
Algebra, Optimization,} 37--63, Contemp. Math., {\bf 374}, Amer.
Math. Soc., Providence, RI, 2005.

\bibitem{Chen-TW}
B. Chen, Conference abstract, 2007 International Conference on Graph
Theory and Combinatorics \& Fourth Cross-strait Conference on Graph
Theory and Combinatorics, June 24-29, 2007.


\bibitem{Chen-Dual}
B. Chen, Dual complementary polynomials and combinatorial
interpretation for the values of Tutte polynomial of graphs at
positive integers, preprint.


\bibitem{Chen-II}
B. Chen and R. Stanley, Orientations, lattice polytopes, and group
arrangements II: Modular and integral flow polynomials of graphs,
preprint.


\bibitem{Crapo-Rota1}
H. Crapo and G.-C. Rota, {\em On the Foundations of Combinatorial Theory:
Combinatorial Geometries,} MIT Press, Cambridge, MA, 1970.


\bibitem{Ehrenborg-Readdy1}
R. Ehrenborg and M.A. Readdy, On valuations, the characteristic
polynomial, and complex subspace arrangements, {\em Adv. Math.} {\bf
134} (1998), 32--42.


\bibitem{Greene-Zaslvsky1}
C. Greene and T. Zaslavsky, On the interpretation of Whitney numbers
through arrangements of hyperplanes, zonotopes, non-Radon
partitions, and orientations of graphs, {\em Trans. Amer. Math.
Soc.} {\bf 280} (1983), 97--126.

\bibitem{Groemer1}
H. Groemer, On the extension of additive functionals on the classes
of convex sets, {\em Pacific J. Math.} {\bf 75} (1978), 397--410.


\bibitem{Kochol1}
M. Kochol, Tension polynomials of graphs, {\em J. Graph Theory}\;
{\bf 40} (2002), 137--146.

\bibitem{Kochol2}
M. Kochol, Polynomials associated with nowhere-zero flows, {\em J.
Combin. Theory Ser. B}\; {\bf 84} (2002), 260--269.

\bibitem{Kochol3}
M. Kochol, Tension-flow polynomials on graphs, {\em Discrete Math.}
{\bf 274} (2004), 173--185.

\bibitem{Kook-Reiner-Stanton}
W. Kook, V. Reiner, and D. Stanton, A convolution formula for the
Tutte polynomial, {\em J. Combin. Theory Ser. B} {\bf 76} (1999),
297--300.


\bibitem{Orlik-Terao1}
P. Orlik and H. Terao, {\em Arrangements of hyperplanes,} Springer,
1992.

\bibitem{Reiner1}
V. Reiner, An interpretation for the Tutte polynomial, {\em European
J. Combin.} {\bf 20} (1999), 149--161.

\bibitem{Rota1}
G.-C. Rota, On the foundations of combinatorial theory I. Theory of
M\"obius functions, {\em Z. Wahrscheinlichkeitstheorie} {\bf 2}
(1964), 340-368.



\bibitem{Stanley1}
R.P. Stanley, Acyclic orientations of graphs, {\em Discrete Math.}
{\bf 5} (1973), 171--178.

\bibitem{Stanley2}
R.P. Stanley, {\em Enumerative Combinatorics I}, Cambridge Univ.
Press, Cambridge, 1997.









\bibitem{Welsh1}
D.J.A. Welsh, {\em Complexity: Knots, Colourings and Counting,}
Cambridge Univ. Press, Cambridge, 1993.

\bibitem{Wu1}
F.Y. Wu, The Potts model, {\em Rev. Modern Phys.} {\bf 54},
235--268.


\bibitem{Zaslavsky2}
T. Zaslavsky, The M\"{o}bius function and the characteristic
polynomial, In: {\em Combinatorial Geometries}, 114--138,
Encyclopedia Math. Appl., {\bf 29}, Cambridge Univ. Press,
Cambridge, 1987.


\end{thebibliography}

\vspace{2ex}

{\bf List symbols}\\

\parbox{1in}{$(G,\varepsilon)$}
a graph $G=(V,E)$ with an orientation $\varepsilon$

\parbox{1in}{$B_\varepsilon$}
union of edge sets of directed cuts of $(G,\varepsilon)$

\parbox{1in}{$C_\varepsilon$}
union of edge sets of directed Eulerian subgraphs of
$(G,\varepsilon)$

\parbox{1in}{$T(G,\varepsilon;A)$}
group of tensions of $(G,\varepsilon)$ with values in a group $A$

\parbox{1in}{$T(G,\varepsilon)$}
vector space of real-valued tensions of $(G,\varepsilon)$

\parbox{1in}{$T_{\mathbbm{z}}(G,\varepsilon)$}
lattice of integer-valued tensions of $(G,\varepsilon)$

\parbox{1in}{$\Delta^+_{\textsc{tn}}(G,B_\varepsilon)$}
$\{f\in T(G,\varepsilon): 0<f|_{B_\varepsilon}<1,
f|_{C_\varepsilon}=0\}$

\parbox{1in}{$\tau(G,t)$} tension polynomial of $G$

\parbox{1in}{$\bar\tau(G,t)$} dual tension polynomial

\parbox{1in}{$\tau_\mathbbm{z}(G,t)$} integral tension polynomial

\parbox{1in}{$\bar\tau_\mathbbm{z}(G,t)$} dual integral tension polynomial

\parbox{1in}{$F(G,\varepsilon;A)$}
group of flows of $(G,\varepsilon)$ with values in a group $A$

\parbox{1in}{$F(G,\varepsilon)$}
vector space of real-valued flows of $(G,\varepsilon)$

\parbox{1in}{$F_{\mathbbm{z}}(G,\varepsilon)$}
lattice of integer-valued flows of $(G,\varepsilon)$

\parbox{1in}{$\Delta^+_{\textsc{fl}}(G,C_\varepsilon)$}
$\{f\in F(G,\varepsilon): 0<f|_{C_\varepsilon}<1,
f|_{C_\varepsilon}=0\}$

\parbox{1in}{$\varphi(G,t)$} flow polynomial of $G$

\parbox{1in}{$\bar\varphi(G,t)$} dual flow polynomial

\parbox{1in}{$\varphi_\mathbbm{z}(G,t)$} integral flow polynomial

\parbox{1in}{$\bar\varphi_\mathbbm{z}(G,t)$} dual integral flow polynomial

\parbox{1in}{$\Omega(G,\varepsilon;A,B)$}
tension-flow group $T(G,\varepsilon;A)\times F(G,\varepsilon;B)$

\parbox{1in}{$\Omega(G,\varepsilon)$}
tension-flow vector space $T(G,\varepsilon)\times F(G,\varepsilon)$

\parbox{1in}{$\Omega_{\mathbbm{z}}(G,\varepsilon)$}
tension-flow lattice $T_{\mathbbm{z}}(G,\varepsilon)\times
F_{\mathbbm{z}}(G,\varepsilon)$

\parbox{1in}{$\Omega_{\text{nz}}(G,\varepsilon)$}
set of nowhere-zero tension-flows in $\Omega(G,\varepsilon)$

\parbox{1in}{$K(G,\varepsilon;A,B)$}
$\{(f,g)\in T(G,\varepsilon;A)\times F(G,\varepsilon;B)\;|\; \supp
f=\ker g\}$

\parbox{1in}{$K(G,\varepsilon)$}
$\{(f,g)\in T(G,\varepsilon)\times F(G,\varepsilon)\:|\: f\cdot
g=0,f+g\neq 0\}$

\parbox{1in}{$K_\mathbbm{z}(G,\varepsilon)$}
$K(G,\varepsilon)\cap ({\Bbb Z}^2)^E$

\parbox{1in}{$\Delta_{\textsc{ctf}}(G,\varepsilon)$}
$\{(f,g)\in K(G,\varepsilon): 0<|f+g|<1\}$, open 0-1 polyhedron

\parbox{1in}{$\Delta^+_{\textsc{ctf}}(G,\varepsilon)$}
$\{(f,g)\in K(G,\varepsilon)\:|\: 0<f+g<1\}$, open 0-1 polytope

\parbox{1in}{$\kappa_\varepsilon(G;p,q)$}
cardinality of dilation
$(p,q)\Delta^+_\textsc{ctf}(G,\varepsilon)\cap({\Bbb Z}^2)^E$

\parbox{1in}{$\bar\kappa_\varepsilon(G;p,q)$}
cardinality of dilation
$(p,q)\bar\Delta^+_\textsc{ctf}(G,\varepsilon)\cap({\Bbb Z}^2)^E$

\parbox{1in}{$\kappa(G;p,q)$}
cardinality of $K(G,\varepsilon;{\Bbb Z}_p,{\Bbb Z}_q)$,
complementary polynomial

\parbox{1in}{$\bar\kappa(G;p,q)$}
dual complementary polynomial

\parbox{1in}{$\kappa_\mathbbm{z}(G;p,q)$}
cardinality of dilation
$(p,q)\Delta_\textsc{ctf}(G,\varepsilon)\cap({\Bbb Z}^2)^E$

\parbox{1in}{$\bar\kappa_\mathbbm{z}(G;p,q)$} dual to $\kappa_\mathbbm{z}$,
dual integral complementary polynomial of $G$

\parbox{1in}{$R(G;p,q)$} Whitney's rank generating polynomial

\parbox{1in}{$T(G;p,q)$} Tutte polynomial

\parbox{1in}{$\psi(G;p,q,r,s)$} weighted complementary polynomial

\parbox{1in}{$\bar\psi(G;p,q,r,s)$} dual weighted complementary polynomial

\parbox{1in}{$\psi_\mathbbm{z}(G;p,q,r,s)$} weighted integral complementary polynomial

\parbox{1in}{$\bar\psi_\mathbbm{z}(G;p,q,r,s)$} dual weighted integral complementary polynomial

\end{document}